\definecolor{mahogany}{cmyk}{0, 0.77, 0.87, 0}
\definecolor{salmon}{cmyk}{0, 0.53, 0.38, 0}
\definecolor{melon}{cmyk}{0, 0.46, 0.50, 0}
\definecolor{yellowgreen}{cmyk}{0.44, 0, 0.74, 0}
\definecolor{brickred}{cmyk}{0, 0.89, 0.94, 0.28}
\definecolor{OliveGreen}{cmyk}{0.64, 0, 0.95, 0.40}
\definecolor{RawSienna}{cmyk}{0, 0.72, 1.0, 0.45}
\definecolor{ZurichRed}{rgb}{1, 0, 0} 
\newtheorem{definition}{Definition}
\begin{document}

\newtheorem{lemma}[thm]{Lemma}
\newtheorem{remark}{Remark}
\newtheorem{proposition}{Proposition}
\newtheorem{theorem}{Theorem}[section]
\newtheorem{deff}[thm]{Definition}
\newtheorem{case}[thm]{Case}
\newtheorem{prop}[thm]{Proposition}
\newtheorem{example}{Example}

\newtheorem{corollary}{Corollary}

\numberwithin{equation}{section}
\numberwithin{definition}{section}
\numberwithin{corollary}{section}

\numberwithin{theorem}{section}

\numberwithin{remark}{section}
\numberwithin{example}{section}
\numberwithin{proposition}{section}

\newcommand{\gap}{\lambda_{2,D}^V-\lambda_{1,D}^V}
\newcommand{\gapR}{\lambda_{2,R}-\lambda_{1,R}}
\newcommand{\bD}{\mathrm{I\! D\!}}
\newcommand{\calD}{\mathcal{D}}
\newcommand{\calA}{\mathcal{A}}

\newcommand{\conjugate}[1]{\overline{#1}}
\newcommand{\abs}[1]{\left| #1 \right|}
\newcommand{\cl}[1]{\overline{#1}}
\newcommand{\expr}[1]{\left( #1 \right)}
\newcommand{\set}[1]{\left\{ #1 \right\}}

\newcommand{\calC}{\mathcal{C}}
\newcommand{\calE}{\mathcal{E}}
\newcommand{\calF}{\mathcal{F}}
\newcommand{\Rd}{\mathbb{R}^d}
\newcommand{\BR}{\mathcal{B}(\Rd)}
\newcommand{\R}{\mathbb{R}}
\newcommand{\al}{\alpha}
\newcommand{\RR}[1]{\mathbb{#1}}
\newcommand{\bR}{\mathrm{I\! R\!}}
\newcommand{\ga}{\gamma}
\newcommand{\om}{\omega}
\newcommand{\A}{\mathbb{A}}
\newcommand{\bH}{\mathbb{H}}
\newcommand{\B}{\mathbb{B}}

\newcommand{\Pro}{\mathbb{P}}
\newcommand\F{\mathcal{F}}
\newcommand\e{\varepsilon}
\def\H{\mathcal{H}}
\def\t{\tau}

\newcommand{\bb}[1]{\mathbb{#1}}
\newcommand{\bI}{\bb{I}}
\newcommand{\bN}{\bb{N}}

\newcommand{\uS}{\mathbb{S}}
\newcommand{\M}{{\mathcal{M}}}
\newcommand{\calB}{{\mathcal{B}}}

\newcommand{\W}{{\mathcal{W}}}

\newcommand{\m}{{\mathcal{m}}}

\newcommand {\mac}[1] { \mathbb{#1} }

\newcommand{\bC}{\Bbb C}

\newcommand{\ang}[1]{\left<#1\right>}  

\newcommand{\brak}[1]{\left(#1\right)}    
\newcommand{\crl}[1]{\left\{#1\right\}}   
\newcommand{\edg}[1]{\left[#1\right]}     

\newcommand{\E}[1]{{\rm E}\left[#1\right]}
\newcommand{\var}[1]{{\rm Var}\left(#1\right)}
\newcommand{\cov}[2]{{\rm Cov}\left(#1,#2\right)}
\newcommand{\N}[1]{||#1||}     
\newcommand{\bM}{\mathbb M}


\title{Martingale transforms and their projection operators on manifolds}
\author{Rodrigo Ba\~nuelos\thanks{Supported in part by NSF Grant \#0603701-DMS},\,  Fabrice Baudoin\footnote{Supported in part by NSF Grant 0907326--DMS}
\\Department of  Mathematics
\\Purdue University\\West Lafayette, 47906}
\maketitle

\begin{abstract}  
We prove the boundedness on $L^p$, $1<p<\infty$,  of operators on manifolds which arise by taking conditional expectation of transformations of stochastic integrals.  These operators include various classical operators such as second order Riesz transforms and operators of Laplace transform-type.  
\end{abstract}
\tableofcontents

\section{Introduction}

The classical martingale inequalities of Burkholder and Gundy \cite{BurGun} play a fundamental role in many areas of probability and its applications.  These inequalities have their roots in the celebrated 1966 martingale transforms inequality of Burkholder \cite{Bur1}. In 1984 \cite{Bur2}, Burkholder obtained the sharp constants in his 1966 martingale inequalities.  In recent years, these sharp inequalities have  had many applications to the study of  basic singular integrals and Fourier multipliers on  Euclidean space $\bR^d$ with the Lebesgue measure (the ordinary Laplacian case) and with Gaussian measure (the Ornstein--Uhlenbeck case). For an account of some of this literature we refer the reader to the overview article  \cite{Ban}.  The purpose of this paper is to show that these martingale transform techniques apply to wide range of operators on manifolds,  including multipliers arising from Schr\"odinger operators and Riesz transforms on Lie groups.   For the Laplacian, and another self-adjoint diffusions without a zero order term, Burkholder's sharp  inequalities can be applied and our $L^p$ bounds are exactly as those on $\bR^d$.  However, in the case of Schr\"odinger operators the sharp Burkholder inequalities do not apply in any direct way and in this case we obtain our results by proving a version of the Burkholder-Gundy inequalities with an exponential weight which  does not produce best constants.  Our results here also correct a gap contained in \cite{Li1}, \cite{Li2}, \cite{Li3}; see Remark \ref{correction} below. 

To introduce our operators and state our results,  let  $\mathbb{M}$ be a smooth manifold endowed with a smooth measure $\mu$. Let $X_1,\cdots, X_d$ be locally Lipschitz vector fields defined on $\mathbb{M}$.   We consider the Schr\"odinger operator,
\[
L=-\frac{1}{2} \sum_{i=1}^d X_i^* X_i+V,
\]
where $X_i^*$ denotes the formal adjoint of $X_i$ with respect to $\mu$ and where $V:\mathbb{M} \rightarrow \mathbb{R}$ is a non-positive smooth function, often referred to it in this paper as a {\it potential}.  We assume that $L$ is essentially self-adjoint with respect to $\mu$ on the space $\mathcal{C}^\infty_0(\mathbb{M})$ of smooth and compactly supported functions. We denote by $(P_t)_{t \ge 0}$ the heat semigroup with generator  $L$. We can write $L=\frac{1}{2} \sum_{i=1}^d X^2_i+X_0+V$, for some locally Lipschitz  vector field $X_0$. 

 Let $A_{ij} : [0,+\infty) \times \mathbb{M} \rightarrow \mathbb{R}$, $1\le i,j \le d$ be bounded and smooth real valued functions and  let $A(t,x)=\left(A_{ij}\right)$ be the $d\times d$ matrix with $A_{ij}$ entries.  Set 
$$
\|A\|=\| |A(t,x)|\|_{L^{\infty} ([0,+\infty) \times \mathbb{M})},
$$
 where $|A(t,x)|$ is the usual quadratic norm of the $d\times d$ matrix $A(t,x)$.  That is,
 $$
 |A(t, x)|=\sup\{|A(t, x)\xi|;  \xi\in \bR^d, |\xi|=1\}. 
 $$
  Our goal in this paper is to study  the continuity on $L_\mu^p(\mathbb{M})$ for $1<p<\infty$ of the operator
\begin{equation}\label{operators}
\mathcal{S}_A f=\sum_{i,j=1}^d\int_{0}^{\infty}   P_t  X_i^*  A_{ij} (t) X_j P_t fdt
\end{equation}
and to obtain precise information on the size of their  norms. The following two remarks shed some light on the structure of these operators in two important special cases. 
\begin{remark}
 If $L=-\frac{1}{2} \sum_{i=1}^d X_i^* X_i+V$ is subelliptic, then the semigroup $P_t$ admits a  smooth symmetric kernel $p(t,x,y)$ (see \cite{baudoin},  \cite{JSC2} ) and in that case, it is easily seen that
\[
\mathcal{S}_A f (x)=\int_{\mathbb{M}} K(x,z) f(z) d\mu(z),
\]
with
\[
K(x,z)=\sum_{i,j=1}^d \int_0^{+\infty} \int_{\mathbb{M}} A_{ij}(t,y) X^y_i p(t,y,x) X^y_j p(t,y,z) d\mu(y) dt.
\]
\end{remark}

\begin{remark}
Let $(\mathbb{M}, g)$ be a complete Riemannian manifold. In this framework the Laplace-Beltrami operator $\Delta$ is essentially self-adjoint on $\mathcal{C}^\infty_0(\mathbb{M})$. We may then consider the Schr\"odinger operator
\[
L=-\frac{1}{2} \Delta+V
\]
where, as above, $V$ is a non-positive smooth potential. In this case, the operator $\mathcal{S}_A$ can be written as 
\[
\mathcal{S}_A f=\int_{0}^{\infty}   P_t  \mathbf{div} (A(t) \nabla P_t f) dt.
\]
\end{remark}

For $1<p<\infty$ let $p^*$  denote the maximum of $p$ and $q$, where $\frac{1}{p}+\frac{1}{q}=1$.   Thus $p^* = \max\{p,\frac{p}{p-1}\}$ and 
\begin{equation}\label{p^*}
p^*-1=\begin{cases} \frac{1}{p-1},  \hskip4mm  1<p\leq 2,\\
p-1 , \hskip3mm  2\leq p <\infty.
\end{cases}
\end{equation}

The following is the main result of this paper. 

\begin{theorem}\label{main}  For any $1<p<\infty$ there is a constant $C_p$ depending only on $p$ such that for every $f \in L_\mu^p(\mathbb{M})$,
\begin{equation}\label{Schro}
\| \mathcal{S}_A f \| \le C_p \|A \| \| f \|_p.
\end{equation}
If the potential $V\equiv 0$, then 
\begin{equation}\label{V=0} 
\| \mathcal{S}_A f \|_p \le (p^*-1) \|A \| \| f \|_p,
\end{equation}
 and this bound is sharp. 
\end{theorem}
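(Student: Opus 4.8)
The plan is to use the Gundy--Varopoulos ``projection'' method --- realize $\mathcal{S}_A f$ as a conditional expectation of a martingale transform, so that Burkholder's sharp transform inequality can be applied --- treating the potential-free case (where this is clean and produces the best constant) separately from the general case (where the zero order term forces an auxiliary, weighted, inequality). First I would pass to a bilinear form: since $L$, and hence every $P_t$, is self-adjoint with respect to $\mu$, for $f,g\in\mathcal{C}^\infty_0(\mathbb{M})$
\[
\langle \mathcal{S}_A f, g\rangle_\mu = \sum_{i,j=1}^d \int_0^\infty \int_{\mathbb{M}} A_{ij}(t,x)\,(X_j P_t f)(x)\,(X_i P_t g)(x)\, d\mu(x)\, dt ,
\]
and $\|\mathcal{S}_A f\|_p = \sup\{|\langle \mathcal{S}_A f,g\rangle_\mu| : \|g\|_q \le 1\}$, so the whole theorem reduces to estimating this form.

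Next I would set up the probabilistic model. Let $(B_t)_{t\ge 0}$ be the $\mu$-symmetric diffusion with generator $L_0 = \tfrac12\sum_i X_i^2 + X_0$, driven by a Brownian motion $W$ via $dB_t = \sum_i X_i(B_t)\,dW^i_t + X_0(B_t)\,dt$, and $\mathbb{P}^\mu = \int_{\mathbb{M}} \mathbb{P}_x\, d\mu(x)$; by symmetry (and conservativeness of $L_0$), $\mathbb{E}^\mu[\Phi(B_s)] = \int_{\mathbb{M}}\Phi\, d\mu$ for every $s$, while Feynman--Kac gives $P_t f(x) = \mathbb{E}_x[\mathcal{E}_t\, f(B_t)]$, where $\mathcal{E}_s := e^{\int_0^s V(B_u)\,du}$. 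Fix a horizon $T>0$. From It\^o's formula and $\partial_t P_t f = L P_t f$, the bounded-variation part of $M^f_s := \mathcal{E}_s\, P_{T-s}f(B_s)$ vanishes, so $M^f$ is a martingale on $[0,T]$ with $dM^f_s = \mathcal{E}_s \sum_i (X_i P_{T-s}f)(B_s)\,dW^i_s$; let $N_s := \sum_i \int_0^s \big(\sum_j A_{ij}(T-r,B_r)\,\mathcal{E}_r\,(X_j P_{T-r}f)(B_r)\big)\,dW^i_r$ be its transform by the matrix $A(T-r,B_r)$, which has operator norm $\le\|A\|$. Then $N$ is differentially subordinate to $\|A\|\,M^f$, so Burkholder's inequality (in the form for stochastic integrals) gives $\|N_T\|_{L^p(\mathbb{P}^\mu)} \le (p^*-1)\|A\|\,\|M^f_T\|_{L^p(\mathbb{P}^\mu)}$, and since $V\le 0$ makes $\mathcal{E}_T\le 1$ we get $\|M^f_T\|_{L^p(\mathbb{P}^\mu)} \le \|f(B_T)\|_{L^p(\mathbb{P}^\mu)} = \|f\|_p$.

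For $V\equiv 0$ the argument closes cleanly. Now $\mathcal{E}\equiv 1$ and $M^g_s := P_{T-s}g(B_s)$ is also a martingale; computing the cross-variation, $\mathbb{E}^\mu[\langle N,M^g\rangle_T] = \int_0^T\!\int_{\mathbb{M}}\sum_{ij}A_{ij}(t,x)(X_jP_tf)(X_iP_tg)\,d\mu\,dt \to \langle \mathcal{S}_A f,g\rangle_\mu$ as $T\to\infty$, while (using $N_0=0$ and $M^g_T=g(B_T)$) integration by parts gives $\mathbb{E}^\mu[\langle N,M^g\rangle_T] = \mathbb{E}^\mu[N_T g(B_T)] = \langle \mathbb{E}^\mu[N_T\mid B_T],g\rangle_\mu$. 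Writing $\mathcal{S}_A^{(T)}f$ for the function with $\mathcal{S}_A^{(T)}f(B_T)=\mathbb{E}^\mu[N_T\mid B_T]$, conditional Jensen and the previous paragraph give $\|\mathcal{S}_A^{(T)}f\|_p \le \|N_T\|_{L^p(\mathbb{P}^\mu)}\le (p^*-1)\|A\|\,\|f\|_p$, and $T\to\infty$ together with weak lower semicontinuity of the norm yields \eqref{V=0}. Sharpness follows by specializing to $\mathbb{M}=\bR^d$ with Lebesgue measure and $L=\tfrac12\Delta$: a Fourier computation identifies $\mathcal{S}_A$, for constant $A$, with the second order Riesz transform $\sum_{ij}a_{ij}R_iR_j$ (symbol $-|\xi|^{-2}\sum_{ij}a_{ij}\xi_i\xi_j$), whose $L^p\to L^p$ norm, supremized over matrices with $|A|\le 1$, is known to be $p^*-1$.

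The general potential case is where the real work lies, and I expect it to be the main obstacle. When $V\not\equiv 0$, $P_{T-s}g(B_s)$ is not a martingale, and the martingale $Q$ against which $N$ must be paired to recover $\langle\mathcal{S}_A f,g\rangle_\mu$ has integrands $\mathcal{E}_s^{-1}(X_iP_{T-s}g)(B_s)$, with $\mathcal{E}_s^{-1}=e^{-\int_0^s V(B_u)du}\ge 1$ an \emph{exponentially growing} weight. The weights $\mathcal{E}_s$ and $\mathcal{E}_s^{-1}$ cancel inside $\langle N,Q\rangle$ --- which is exactly why the bilinear form is still recovered in the limit --- but the resulting expectation can no longer be split by H\"older's inequality without losing uniformity in $T$; one must instead establish a Burkholder--Gundy-type inequality carrying this exponential weight. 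Concretely, after $|A(t,x)|\le\|A\|$ and Cauchy--Schwarz in $t$ it suffices to bound
\[
\mathbb{E}^\mu\Big[\big(\textstyle\int_0^T |\nabla P_{T-s}f|^2(B_s)\,ds\big)^{1/2}\big(\textstyle\int_0^T |\nabla P_{T-s}g|^2(B_s)\,ds\big)^{1/2}\Big] \;\le\; C_p\,C_q\,\|f\|_p\,\|g\|_q,
\]
which one obtains by applying the ordinary Burkholder--Davis--Gundy and Doob inequalities to the compensated martingales $M^{h,\flat}_s := P_{T-s}h(B_s) + \int_0^s V(B_r)P_{T-r}h(B_r)\,dr$ (whose brackets equal $\int_0^s|\nabla P_{T-r}h|^2(B_r)\,dr$) and using the $L^p$-contractivity $\|P_\sigma\|_{p\to p}\le 1$ of the Schr\"odinger semigroup, and the decay it provides, to absorb the compensator term. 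This weighted step produces a constant $C_p$ depending only on $p$, but not the sharp one; combined with the duality reduction it yields \eqref{Schro}.
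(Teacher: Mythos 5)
Your proposal is correct in outline and follows the same Gundy--Varopoulos strategy as the paper: realize the operator as a conditional expectation of a stochastic-integral transform of the Feynman--Kac martingale $\mathcal{E}_s P_{T-s}f(Y_s)$, apply the Burkholder--Wang differential-subordination inequality when $V\equiv 0$ to get the sharp constant $p^*-1$ (with sharpness read off from second order Riesz transforms on $\bR^d$), and pass to $T\to\infty$ through the bilinear form. Where you diverge is in the treatment of $V\not\equiv 0$. The paper keeps the exponential weights attached to the operator itself: it writes $\mathcal{S}_A^T f(Y_T)$ as the conditional expectation of $e^{\int_0^T V}\int_0^T e^{-\int_0^t V}\,dM_t$ and proves a dedicated weighted Burkholder--Gundy inequality (Theorem \ref{BDG}, via Lenglart's domination lemma) reducing everything to the $L^{p/2}$ norm of $\int_0^T\sum_i(X_iP_{T-t}f)^2(Y_t)\,dt$, which it controls for $p\ge 2$ using the submartingale property of $(P_{T-t}f)^2(Y_t)$ together with Lenglart--L\'epingle--Pratelli and Doob, finishing $1<p<2$ by duality. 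You instead work entirely at the level of the pairing $\langle N,Q\rangle_T$, where the weights $\mathcal{E}$ and $\mathcal{E}^{-1}$ cancel, and reduce to the bilinear square-function estimate $\mathbb{E}\bigl[(\int_0^T|\nabla P_{T-s}f|^2(B_s)ds)^{1/2}(\int_0^T|\nabla P_{T-s}g|^2(B_s)ds)^{1/2}\bigr]\le C_pC_q\|f\|_p\|g\|_q$; this is essentially the paper's \emph{secondary} argument (the Proposition of \S 3.1 and Corollary \ref{NazVol-manifolds}), and it has the advantage of treating all $1<p<\infty$ symmetrically without a separate duality step, and of yielding explicit constants, at the cost of never producing an honest $L^p$ bound on the weighted stochastic integral itself (which the paper's Theorem \ref{BDG} does, and which is what is needed to repair the gap in Li's papers noted in Remark \ref{correction}).

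One step you should tighten: you say the compensator term of $M^{h,\flat}_s=P_{T-s}h(B_s)+\int_0^s V(B_r)P_{T-r}h(B_r)\,dr$ is absorbed ``using the $L^p$-contractivity of the Schr\"odinger semigroup and the decay it provides.'' Contractivity alone does not control $\|\int_0^T|V(B_r)|P_{T-r}h(B_r)\,dr\|_p$, since $|V|$ may be unbounded. The correct mechanism is structural: for $h\ge 0$ and $V\le 0$ the process $P_{T-s}h(B_s)$ is a nonnegative submartingale whose increasing part in the Doob--Meyer decomposition is exactly $-\int_0^s V(B_r)P_{T-r}h(B_r)\,dr$, so the Lenglart--L\'epingle--Pratelli inequality bounds its $L^p$ norm by $p\,\|\sup_{s\le T}P_{T-s}h(B_s)\|_p$, which Doob and stationarity bound by $\frac{p^2}{p-1}\|h\|_p$; general $h$ is then handled by splitting $h=h^+-h^-$. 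With that substitution your argument is complete.
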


As we shall see below, these operators include the multipliers of Laplace transform-type and second order Riesz transforms on Lie groups of compact type.  The fact that the bound in \eqref{V=0} is best possible follows from the fact that the best constant in the $L^p$ inequality for second order Riesz transforms $R_jR_k$ on $\bR^d$ is $p^*-1$; see \S \ref{LieRiesz} below and \cite{GeiSmiSak} and \cite{BanOse}. 

The paper is organized as follows.  In \S2, we recall various versions of Burkholder's sharp inequalities and prove a version of the Burkholder-Gundy inequality needed for \eqref{Schro}.  The proof of Theorem \ref{main} is given in \S3 where we also  give an explicit bound for the constant in \eqref{Schro}.  In \S4, we give some concrete examples of our operators. 
\section{Martingale  inequalities}

In this section we recall the sharp martingale inequalities of Burkholder and prove a version of the Burkholder-Gundy inequalities used in the proof of \eqref{Schro} in Theorem \ref{main}. 

 Let  $f=\{f_n, n\geq 0\}$ be a martingale with different sequence $df=\{df_k, k\geq 0\}$, where $df_k=f_k-f_{k-1}$ for $k\geq 1$ and $df_0=f_0$.  Given a predictable sequence of random variables $\{v_k, k\geq 0\}$ uniformly bounded for all $k$ the martingale difference sequence $\{v_kdf_k, k\geq 0\}$ generates a new martingale called the {\it martingale transform} of $f$ and  denoted by $v\ast f$.   We set $\|f\|_p=\sup_{n\geq 0}\|f_n\|_p$ for $0<p<\infty$. Burkholder's 1966 result  \cite{Bur1} asserts that the operator $f\to v\ast f=g$ is bounded on $L^p$ for all $1<p<\infty$. In his 1984 seminal paper \cite{Bur2} Burkholder determined the norm of this operator by proving the following result. 
\begin{theorem}\label{Burkholder1} Let $f=\{f_n, n\geq 0\}$ be a martingale  and let $g=v\ast f$ be its martingale transform by the predictable sequence $v=\{v_k, k\geq 0\}$ with $v_k$ taking values in $[-1, 1]$ for all $k$.  Then 
\begin{equation}\label{bur2}
\|g\|_p\leq (p^*-1)\|f\|_p, \qquad  1<p<\infty,
\end{equation}
and the constant $p^*-1$ is best possible. 
\end{theorem}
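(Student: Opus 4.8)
The plan is to reproduce Burkholder's proof by means of a special (Bellman) function; the argument is purely martingale-theoretic and goes through verbatim for martingales taking values in a Hilbert space, which is the generality needed for the applications in \S4. Write $V(x,y)=|y|^p-(p^*-1)^p|x|^p$. Since $dg_k=v_k\,df_k$ with $v_k\in[-1,1]$, we have $|dg_k|\le|df_k|$ for $k\ge1$ and $|g_0|=|v_0\,df_0|\le|f_0|$. Because $\E{V(f_n,g_n)}=\|g_n\|_p^p-(p^*-1)^p\|f_n\|_p^p$, it suffices to show $\E{V(f_n,g_n)}\le0$ for every $n$ and then take the supremum over $n$; the non-sharp $L^p$ boundedness of $f\mapsto v\ast f$ from \cite{Bur1} ensures $g_n\in L^p$, so every expectation below is finite.

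First I would introduce Burkholder's function
\[
U(x,y)=p\Bigl(1-\tfrac{1}{p^*}\Bigr)^{p-1}\bigl(|y|-(p^*-1)|x|\bigr)\bigl(|x|+|y|\bigr)^{p-1}
\]
and establish its three defining properties: (i) $U\ge V$ pointwise; (ii) $U(x,y)\le0$ whenever $|y|\le|x|$; and (iii) for each $(x,y)$ there are $\alpha=\alpha(x,y)$ and $\beta=\beta(x,y)$ with
\[
U(x+h,\,y+\ell)\le U(x,y)+\alpha h+\beta\ell\qquad\text{whenever }|\ell|\le|h|.
\]
Property (ii) is immediate (if $|y|\le|x|$ then $|y|-(p^*-1)|x|\le0$ since $p^*\ge2$); property (i), reduced by homogeneity to a one-variable inequality, is elementary, and it is precisely here that the two branches of \eqref{p^*}, hence the exponent $p^*-1$, enter according as $1<p\le2$ or $2\le p<\infty$. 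Property (iii) is the delicate point: after using homogeneity and the symmetries $(x,y)\mapsto(-x,y)$, $(x,y)\mapsto(x,-y)$, it reduces to a one-variable calculus estimate whose extremal configuration is $|\ell|=|h|$.

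Granting (i)--(iii), I finish by telescoping. Fix $n$ and condition on $\F_{k-1}$: applying (iii) with $h=df_k$ and $\ell=dg_k=v_k\,df_k$ (so $|\ell|\le|h|$), and using that $\alpha(f_{k-1},g_{k-1})$, $\beta(f_{k-1},g_{k-1})$ and $v_k$ are $\F_{k-1}$-measurable while $\E{df_k\mid\F_{k-1}}=0$, one obtains $\E{U(f_k,g_k)\mid\F_{k-1}}\le U(f_{k-1},g_{k-1})$. Hence $n\mapsto\E{U(f_n,g_n)}$ is nonincreasing, and by (i), (ii) and $|g_0|\le|f_0|$,
\[
\E{V(f_n,g_n)}\le\E{U(f_n,g_n)}\le\E{U(f_0,g_0)}\le0 .
\]
For the sharpness of $p^*-1$, by the duality between the $L^p$ and $L^q$ operator norms of these transforms it suffices to treat $p\ge2$, and there one constructs, for each $\varepsilon>0$, a dyadic martingale $f$ with a $\{-1,1\}$-valued predictable $v$ that nearly saturates (iii) at every step, so that $\|v\ast f\|_p/\|f\|_p>p^*-1-\varepsilon$.

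The main obstacle is the construction of $U$: guessing the right function and verifying the constrained, non-smooth inequality (iii), together with recognizing $p^*-1$ as exactly the largest constant for which (i) and (ii) can hold simultaneously. Everything else --- the integrability reduction, the tower property, the telescoping, and the extremal examples --- is routine. I would also stress that this sharp function is available only because $L$ carries no zero-order term; once $V\not\equiv0$ the direct Bellman argument fails, which is why estimate \eqref{Schro} of Theorem \ref{main} yields only a non-sharp constant $C_p$, derived instead from a Burkholder--Gundy inequality with an exponential weight.
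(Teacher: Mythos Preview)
The paper does not prove this theorem at all: it is quoted as Burkholder's 1984 result and simply cited to \cite{Bur2} (and to \cite{Bur1} for the earlier non-sharp version). There is no ``paper's own proof'' to compare against.

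That said, your sketch is a correct outline of Burkholder's original argument. The special function $U(x,y)=p\bigl(1-\tfrac{1}{p^*}\bigr)^{p-1}\bigl(|y|-(p^*-1)|x|\bigr)(|x|+|y|)^{p-1}$ is exactly the one Burkholder introduced, and properties (i)--(iii) together with the conditional telescoping are the standard route to \eqref{bur2}. Your remark that the method extends to Hilbert-space-valued martingales is also correct and is what underlies Theorem~\ref{symm} (the continuous-time version actually used later in the paper). One small caveat: in the Hilbert-space formulation, $\alpha$ and $\beta$ in (iii) are linear functionals rather than scalars, so $\alpha h$ should be read as $\langle\alpha,h\rangle$; you should make that explicit if you intend the argument to cover the vector-valued case you mention. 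The appeal to \cite{Bur1} for a priori $L^p$-integrability of $g_n$ is a legitimate shortcut, though Burkholder himself handles this by truncation so that the argument is self-contained.
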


%

In \cite{Cho}, K.P. Choi used the techniques of Burkholder to identify the best constant in the martingale transforms where the predictable sequence $v$ takes values in $[0, 1]$ instead of $[-1, 1]$. While Choi's  constant is not as explicit as the $p^*-1$ constant of Burkholder, one does have a lot of information on it. 

 \begin{theorem}\label{Choi1} Let $f=\{f_n, n\geq 0\}$ be a real-valued martingale and let $g=v\ast f$ be its martingale by a predictable sequence $v=\{v_k, k\geq 0\}$ with values in $[0,1]$ for all $k$.  Then 
\begin{equation}\label{choi1}
\|g\|_p\leq c_p\|f\|_p, \qquad  1<p<\infty,
\end{equation}
with the best constant $c_p$ satisfying
$$
c_p=\frac{p}{2}+ \frac{1}{2}\log\left(\frac{1+e^{-2}}{2}\right) +\frac{\alpha_2}{p}+\cdots
$$
where 
$$\alpha_2=\left[\log\left(\frac{1+e^{-2}}{2}\right)\right]^2+\frac{1}{2}\log\left(\frac{1+e^{-2}}{2}\right)-2\left(\frac{e^{-2}}{1+e^{-2}}\right)^{2}. $$
\end{theorem}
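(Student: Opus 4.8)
The natural route is \emph{Burkholder's method of special functions} (the ``Bellman function'' method), the same machinery behind Theorem~\ref{Burkholder1} but with the concavity structure adapted to multipliers in $[0,1]$ rather than $[-1,1]$. Since $dg_k=v_k\,df_k$ with $v_k$ predictable and $v_k\in[0,1]$, conditionally on $\F_{k-1}$ the pair $(f_k,g_k)$ equals $(x+h,\,y+vh)$ with $x=f_{k-1}$, $y=g_{k-1}$, $v=v_k\in[0,1]$ a fixed scalar, and $\mathbb{E}[h\mid\F_{k-1}]=0$. So it suffices to produce a function $U:\R^2\to\R$ with: (i) \emph{majorization}, $U(x,y)\ge |y|^p-c_p^{\,p}\,|x|^p$ for all $(x,y)$; (ii) \emph{initial condition}, $U(x,vx)\le 0$ for all $x\in\R$, $v\in[0,1]$; and (iii) \emph{concavity}, for every $(x,y)$ and every direction $(h,k)$ with $k/h\in[0,1]$ the tangent-line bound $U(x+h,y+k)\le U(x,y)+\langle\nabla U(x,y),(h,k)\rangle$. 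Granting such a $U$, Jensen's inequality applied in the direction $(1,v_k)$ gives $\mathbb{E}[U(f_k,g_k)\mid\F_{k-1}]\le U(f_{k-1},g_{k-1})$, so $\{U(f_n,g_n)\}_n$ is a supermartingale and
\[
\mathbb{E}\!\left[|g_n|^p-c_p^{\,p}\,|f_n|^p\right]\le \mathbb{E}\!\left[U(f_n,g_n)\right]\le \mathbb{E}\!\left[U(f_0,g_0)\right]\le 0,
\]
the last step by (ii) with $g_0=v_0f_0$; a routine truncation and limiting argument then yields \eqref{choi1}. One should note that the crude substitution $v_k=\tfrac12(1+w_k)$ with $w_k\in[-1,1]$ gives only $c_p\le p^*/2$ via the triangle inequality and Theorem~\ref{Burkholder1}; this has the correct leading term $p/2$ for large $p$ but is not sharp, since the true correction $\tfrac12\log\tfrac{1+e^{-2}}{2}$ is negative.

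The crux is to write down the correct $U$ and the correct value of $c_p$. Using $p$-homogeneity, $U(\lambda x,\lambda y)=|\lambda|^p\,U(x,y)$, one reduces $U$ to a one-variable profile on finitely many angular sectors: in the sector where the majorization is slack one solves the (degenerate) elliptic equation describing the borderline of the concavity constraint — which, under the homogeneity ansatz, becomes a linear ODE for the profile — while in the sector where the obstacle $|y|^p-c_p^{\,p}|x|^p$ is active one simply takes $U$ equal to it. Imposing that $U$ and $\nabla U$ match across the interfaces pins down the free constants of the ODE solution and, at the same time, produces a transcendental equation for $c_p$; it is the solution of this equation that carries the expansion in the statement. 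The \textbf{main obstacle} is the global verification of (iii): because the admissible set of directions $\{(h,k):k/h\in[0,1]\}$ is an \emph{asymmetric} cone, Choi's function is genuinely more delicate than Burkholder's $p^*-1$ function, and the tangent-line inequality must be checked for \emph{all} $(x,y)$ and all admissible $(h,k)$ — not merely infinitesimally — which forces a careful case analysis over the sectors and their interfaces.

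Finally, sharpness and the asymptotics. That $c_p$ cannot be lowered is shown by constructing suitable near-extremal martingales, of the type used in Burkholder's and Choi's arguments (for instance, appropriately stopped dyadic martingales), along which $U(f_n,g_n)$ is essentially ``harmonic'' for the directions actually used, so that every inequality in the display above becomes asymptotically an equality and the constant in \eqref{choi1} is forced to be at least $c_p$. The expansion
\[
c_p=\frac{p}{2}+\frac12\log\!\left(\frac{1+e^{-2}}{2}\right)+\frac{\alpha_2}{p}+\cdots
\]
then follows by solving the transcendental equation for $c_p$ perturbatively in powers of $1/p$ as $p\to\infty$; evaluating the first two corrections gives precisely $\tfrac12\log\tfrac{1+e^{-2}}{2}$ and $\alpha_2/p$ with $\alpha_2$ as displayed in Theorem~\ref{Choi1}.
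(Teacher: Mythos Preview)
The paper does not prove Theorem~\ref{Choi1}; it is stated and attributed to Choi~\cite{Cho}, who ``used the techniques of Burkholder to identify the best constant.'' So there is no proof in the paper to compare against. Your outline correctly identifies Choi's actual approach --- Burkholder's special-function method, adapted to the asymmetric cone of directions $\{(h,k):k/h\in[0,1]\}$ --- and the structural steps you list (majorization, initial condition, concavity on the admissible cone, supermartingale argument, $p$-homogeneous reduction to an ODE, matching at interfaces producing a transcendental equation for $c_p$, extremal martingales for sharpness) are the right skeleton.

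That said, what you have written is a \emph{sketch} rather than a proof: the entire content of Choi's paper lies in the steps you gloss over. You do not exhibit the function $U$, you do not write down the ODE or its solution, you do not state the transcendental equation whose root is $c_p$, and you do not carry out the ``careful case analysis'' you acknowledge is the main obstacle. The verification of the tangent-line inequality across all sectors and interfaces, and the construction of the extremal sequence, are each substantial computations in \cite{Cho}. As written, your proposal would be appropriate as a paragraph explaining \emph{why} one expects such a result and \emph{how} one would go about it, but it does not stand on its own as a proof --- which is consistent with the paper's own treatment, since the paper simply quotes the result.
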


%
%
Motivated by Theorems \ref{Burkholder1} and \ref{Choi1} the following definition was introduced in \cite{BanOse}.

\begin{definition}\label{defC}
Let $-\infty<b<B<\infty$ and $1<p<\infty$ be given and fixed. We define $C_{p,b,B}$ as the least positive number $C$ such that for any real-valued martingale $f$ and for any transform $g=v\ast f$ of $f$ by a predictable sequence $v=\{v_k, k\geq 0\}$ with values in $[b,B]$,  we have
\begin{equation}\label{martin}
 ||g||_p\leq C||f||_p.
\end{equation}
\end{definition}

Thus, for example, $C_{p,-a,a}=a(p^*-1)$ by Burkholder's Theorem 
 \ref{Burkholder1} and $C_{p,0,a}=a\,c_p$ by Choi's Theorem \ref{Choi1}.  It is also the case that for any $b, B$ as above, 
 
 \begin{equation}
 \max\left\{\left(\frac{B-b}{2}\right)(p^*-1), \, \max\{|B|, |b|\}\right\} \leq C_{p,b,B}\leq \max\{B, |b|\}(p^*-1)
  \end{equation}

For the applications to the above operators we will need versions of these inequalities for continuous-time martingales. Suppose that $(\Omega,\mathcal{F},\mathbb{P})$ is a complete probability space, filtered by $(\mathcal{F}_t)_{t\geq 0}$, a nondecreasing and right-continuous family of sub-$\sigma$-fields of $\mathcal{F}$. Assume, as usual, that $\F_0$ contains all the events of probability $0$. Let $X$, $Y$ be adapted, real valued martingales which have right-continuous paths with left-limits (r.c.l.l.).  Denote by $[X,X]$ the quadratic variation process of $X$: we refer the reader to Dellacherie and Meyer \cite{DelMey} for details. 
Following  \cite{BanWan} and  \cite{Wan}, we say that $Y$ is differentially subordinate to $X$ if the process if $|Y_0| \leq |X_0|$ and $([X,X]_t-[Y,Y]_t)_{t\geq 0}$ is nondecreasing and nonnegative as a function of $t$. We have the following extension of the Burkholder inequalities  proved in \cite{BanWan} for continuous-path martingales and in \cite{Wan} in the general case. Set $\|X\|_p=\sup_{t\geq 0}\|X_t\|$, $0<p<\infty$.
\begin{theorem}\label{symm}
If $Y$ is differentially subordinate to $X$, then
\begin{equation}\label{burkholder}
\|Y\|_p\leq (p^*-1)\|X\|_p, \qquad 1<p<\infty,
\end{equation}
and the inequality is sharp.  
\end{theorem}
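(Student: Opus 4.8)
\emph{Proof sketch (proposal).} The plan is to use Burkholder's method, following \cite{BanWan} and \cite{Wan}: inequality \eqref{burkholder} is deduced from the existence of a single function of two real variables together with one application of It\^o's formula. We may assume $\|X\|_p<\infty$, and we fix $p\in(1,\infty)$. The starting point is the \emph{Burkholder function}: a map $U=U_p\colon\mathbb{R}\times\mathbb{R}\to\mathbb{R}$ of class $C^1$ and piecewise $C^2$ such that
\[
U(x,y)\ \geq\ |y|^p-(p^*-1)^p|x|^p\quad\text{for all }(x,y),
\]
\[
U(x,y)\ \leq\ 0\quad\text{whenever }|y|\leq|x|,
\]
and, for all $(x,y)$ and all $h,k\in\mathbb{R}$ with $|k|\leq|h|$,
\[
U(x+h,y+k)\ \leq\ U(x,y)+U_x(x,y)\,h+U_y(x,y)\,k .
\]
For $2\le p<\infty$ one may take $U_p(x,y)=\alpha_p\bigl(|y|-(p-1)|x|\bigr)\bigl(|x|+|y|\bigr)^{p-1}$ with $\alpha_p=p\bigl(1-\tfrac1p\bigr)^{p-1}$, while for $1<p<2$ the Burkholder function attached to the exponent $p^*-1=1/(p-1)$ serves the same purpose; verifying the three displayed properties is an elementary (if lengthy) calculus computation carried out in \cite{Bur2}.

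Granting this, I would apply It\^o's formula to the semimartingale $U(X_s,Y_s)$. Since $U$ is only piecewise $C^2$, this is made rigorous by mollifying, $U^\varepsilon=U\ast\phi_\varepsilon$, applying It\^o's formula to $U^\varepsilon$, and letting $\varepsilon\to0$. For a bounded stopping time $\tau$ one obtains, after discarding the stochastic-integral terms (localizing and using that $U$ has polynomial growth, so these are genuine martingales),
\[
\mathbb{E}\,U(X_\tau,Y_\tau)\ =\ \mathbb{E}\,U(X_0,Y_0)+\mathbb{E}\,I_\tau+\mathbb{E}\,J_\tau ,
\]
where $I_\tau=\tfrac12\int_0^\tau\bigl(U_{xx}\,d[X,X]^c+2U_{xy}\,d[X,Y]^c+U_{yy}\,d[Y,Y]^c\bigr)$ and $J_\tau=\sum_{s\leq\tau}\bigl(U(X_s,Y_s)-U(X_{s-},Y_{s-})-U_x\,\Delta X_s-U_y\,\Delta Y_s\bigr)$. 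Differential subordination forces $|\Delta Y_s|\leq|\Delta X_s|$, so $J_\tau\leq0$ by the concavity-type estimate above. For $I_\tau$, differential subordination gives $d[X,X]^c-d[Y,Y]^c\geq0$, and by Kunita--Watanabe the instantaneous covariance matrix $\Sigma$ of $(X^c,Y^c)$ is positive semidefinite with $\Sigma_{11}\geq\Sigma_{22}$; any such $\Sigma$ is a nonnegative combination of rank-one matrices $vv^{\mathsf T}$ with $v_1^2\geq v_2^2$, and on each of these $\langle D^2U(x,y)v,v\rangle\leq0$ is precisely the infinitesimal form of the concavity-type estimate, whence $I_\tau\leq0$. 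Therefore $\mathbb{E}\,U(X_\tau,Y_\tau)\leq\mathbb{E}\,U(X_0,Y_0)\leq0$, the last step by the second displayed property and $|Y_0|\leq|X_0|$.

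Taking $\tau=\tau_n\wedge t$ for a localizing sequence $\tau_n\uparrow\infty$ and invoking the majorization property,
\[
\mathbb{E}\,|Y_{\tau_n\wedge t}|^p\ \leq\ (p^*-1)^p\,\mathbb{E}\,|X_{\tau_n\wedge t}|^p\ \leq\ (p^*-1)^p\,\|X\|_p^p ,
\]
the second inequality by Jensen's inequality and optional stopping; Fatou's lemma as $n\to\infty$ gives $\mathbb{E}\,|Y_t|^p\leq(p^*-1)^p\|X\|_p^p$, and a supremum over $t$ yields \eqref{burkholder}. Sharpness is inherited from the discrete case: given $\varepsilon>0$, Theorem \ref{Burkholder1} furnishes a discrete martingale $f=(f_n)$ and a predictable sequence $v=(v_k)$ with values in $[-1,1]$ such that $g=v\ast f$ satisfies $\|g\|_p\geq(p^*-1-\varepsilon)\|f\|_p$; regarding $f,g$ as r.c.l.l.\ martingales $X,Y$ that are constant on each interval $[n,n+1)$, one has $[X,X]_t-[Y,Y]_t=\sum_{n\leq t}(1-v_n^2)(f_n-f_{n-1})^2$ nondecreasing and nonnegative and $|Y_0|=|v_0f_0|\leq|f_0|=|X_0|$, so $Y$ is differentially subordinate to $X$ and \eqref{burkholder} gives $(p^*-1-\varepsilon)\|f\|_p\leq\|g\|_p\leq(p^*-1)\|f\|_p$; letting $\varepsilon\to0$ shows $p^*-1$ cannot be improved.

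The step I expect to be the main obstacle is making the It\^o-formula argument rigorous when $U$ is merely piecewise $C^2$: one must control the errors introduced by mollification in the three defining properties of $U$, verify that the local-martingale parts vanish in expectation after localization, and secure the finiteness and convergence of all the expectations involved; the reduction of the continuous-bracket term $I_\tau$ to the scalar concavity inequality via the rank-one decomposition of $\Sigma$ is the other delicate point.
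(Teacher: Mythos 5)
The paper does not prove Theorem \ref{symm} itself but simply cites \cite{BanWan} and \cite{Wan}, and your proposal is a faithful reconstruction of exactly the Burkholder-function argument used in those references: the majorization, negativity on $\{|y|\leq|x|\}$, and directional concavity properties of $U_p$, It\^o's formula with separate treatment of the jump part and of the continuous bracket, and sharpness inherited from the discrete Theorem \ref{Burkholder1} by embedding a martingale transform as a piecewise-constant r.c.l.l.\ pair. The two technical points you flag (mollifying the piecewise-$C^2$ function and reducing the bracket term to the scalar concavity inequality via the rank-one decomposition of the instantaneous covariance matrix, which is indeed valid for $2\times2$ PSD matrices with $\Sigma_{11}\geq\Sigma_{22}$) are genuine but are handled exactly as you indicate in the cited papers, so your approach is correct and essentially the same as the one the paper relies on.
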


The case of non-symmetric multipliers is covered by the following result proved in \cite{BanOse}.
\begin{theorem}\label{non-symm}
\small
Suppose $-\infty<b<B<\infty$ and  $X_t$, $Y_t$ are two real valued martingales with which have right-continuous paths with left-limits with $|Y_0|\leq |X_0|$ and which satisfy 
\begin{equation}\label{banoseestiamte}
\left[\frac{B-b}{2}X,\frac{B-b}{2}X\right]_t-\left[Y-\frac{b+B}{2}X,Y-\frac{b+B}{2}X\right]_t\geq 0
\end{equation}
and nondecreasing for all $t\geq 0$ (differential subordination). Then 
\begin{equation*}
 ||Y||_p\leq C_{p,b,B}||X||_p, \quad 1<p<\infty,
\end{equation*}
and the inequality is sharp.
\end{theorem}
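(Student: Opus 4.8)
The plan is to carry out Burkholder's ``special function'' (Bellman function) method in the present non-symmetric, two-dimensional setting. Write $C=C_{p,b,B}$. The core of the argument is to produce a function $U=U_{p,b,B}\colon\mathbb{R}^{2}\to\mathbb{R}$, of class $C^{1}$, with: (i) $U(x,y)\ge|y|^{p}-C^{p}|x|^{p}$ for all $x,y$ (majorization); (ii) $U(x,y)\le0$ whenever $|y|\le|x|$ (initial estimate); and (iii) for all $x,y,h\in\mathbb{R}$ and all $v\in[b,B]$, the map $t\mapsto U(x+th,\,y+tvh)$ is concave on $\mathbb{R}$ (concavity along admissible directions). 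The existence of such a $U$ is the ``functional'' reformulation --- underlying Definition~\ref{defC} and the Burkholder--Choi circle of ideas --- of the fact that $C_{p,b,B}$ is the \emph{best} constant in the discrete transform inequality \eqref{martin}; concretely, $U$ is obtained by extending to a general interval $[b,B]$ the special functions of Burkholder (for $[-1,1]$) and Choi (for $[0,1]$). I expect this construction, and the verification of (iii) in particular, to be the main obstacle. One warning: a \emph{sharp} $U$ cannot be obtained simply by pulling Burkholder's function for $[-1,1]$ back along the affine change of variables $(x,y)\mapsto\bigl(x,\tfrac{2}{B-b}\bigl(y-\tfrac{B+b}{2}x\bigr)\bigr)$ --- that route merely combines Theorem~\ref{symm} with the triangle inequality and produces the strictly larger constant $\tfrac{B-b}{2}(p^{*}-1)+\tfrac{|B+b|}{2}$ --- so the extremal $U$ must genuinely encode the one-sided range of the multiplier.

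Granting $U$, the inequality is a routine stochastic-calculus argument; we may assume $\|X\|_{p}<\infty$. Let $X,Y$ be r.c.l.l.\ martingales with $|Y_{0}|\le|X_{0}|$ and satisfying \eqref{banoseestiamte}, and apply the It\^o--Meyer change-of-variables formula to $U(X_{t},Y_{t})$ (mollifying $U$ first if needed, since it is in general only of class $C^{1}$). The continuous finite-variation part equals $\tfrac12\int_{0}^{t}\mathrm{tr}\bigl(\nabla^{2}U(X_{s},Y_{s})\,Q_{s}\bigr)\,d\mu_{s}$, where $\mu$ is a scalar measure and $Q_{s}$ the corresponding matrix density of the continuous bracket of $(X,Y)$; by \eqref{banoseestiamte} and the Kunita--Watanabe inequality, $Q_{s}$ lies in the convex hull of the rank-one matrices $(1,v)^{\!\top}(1,v)$, $v\in[b,B]$, so the integrand is a convex combination of the quantities $U_{xx}+2vU_{xy}+v^{2}U_{yy}$, each $\le0$ by the infinitesimal form of (iii). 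The purely discontinuous finite-variation part is $\sum_{0<s\le t}\bigl[\,U(X_{s},Y_{s})-U(X_{s-},Y_{s-})-U_{x}(X_{s-},Y_{s-})\Delta X_{s}-U_{y}(X_{s-},Y_{s-})\Delta Y_{s}\,\bigr]$, and each summand is $\le0$ by (iii) applied with $h=\Delta X_{s}$ and $v=\Delta Y_{s}/\Delta X_{s}$ --- whose membership in $[b,B]$ is precisely the jump form of \eqref{banoseestiamte} (and $\Delta X_{s}=0$ forces $\Delta Y_{s}=0$ by the same condition). Hence, along a localizing sequence of stopping times $(\tau_{n})$ --- chosen to make the processes bounded, so that all terms are integrable and the local-martingale part of the expansion is a genuine martingale --- one gets $\mathbb{E}\,U(X_{t\wedge\tau_{n}},Y_{t\wedge\tau_{n}})\le\mathbb{E}\,U(X_{0},Y_{0})\le0$ by (ii), and combining with (i) gives $\mathbb{E}\,|Y_{t\wedge\tau_{n}}|^{p}\le C^{p}\,\mathbb{E}\,|X_{t\wedge\tau_{n}}|^{p}\le C^{p}\|X\|_{p}^{p}$. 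Letting $n\to\infty$ (Fatou on the left; the submartingale property of $|X|^{p}$ and optional stopping on the right) and then taking the supremum over $t\ge0$ yields $\|Y\|_{p}\le C_{p,b,B}\|X\|_{p}$.

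Sharpness is inherited from the discrete problem. A martingale $f$ and a transform $g=v\ast f$ by a predictable sequence with values in $[b,B]$, viewed as r.c.l.l.\ processes constant on each $[n,n+1)$, automatically satisfy \eqref{banoseestiamte}, since $\Delta g_{n}=v_{n}\Delta f_{n}$ with $v_{n}\in[b,B]$ is exactly $\bigl(\tfrac{B-b}{2}\Delta f_{n}\bigr)^{2}-\bigl(\Delta g_{n}-\tfrac{B+b}{2}\Delta f_{n}\bigr)^{2}\ge0$. Running $(f,g)$ over (near-)extremizers of \eqref{martin} --- after an initial normalization, or after first letting $X$ roam so as to render the initial data negligible, so that the constraint $|Y_{0}|\le|X_{0}|$ can be met --- pushes the ratio $\|Y\|_{p}/\|X\|_{p}$ arbitrarily close to $C_{p,b,B}$, so no smaller constant is admissible in Theorem~\ref{non-symm}.
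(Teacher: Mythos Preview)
The paper does not prove Theorem~\ref{non-symm}; it is quoted from \cite{BanOse} as a known input. So there is no in-paper argument to compare against, and your outline is in fact the method used in \cite{BanOse}.

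The stochastic-calculus half of your sketch is sound. Your key structural claim---that the density $Q_s$ of the continuous bracket (taken with respect to $d\langle X\rangle^c$, which does dominate here since the subordination forces $d\langle Y\rangle^c\ll d\langle X\rangle^c$) lies in the convex hull of $\{(1,v)^\top(1,v):v\in[b,B]\}$---is correct: writing $Q_s=\bigl(\begin{smallmatrix}1&c\\c&d\end{smallmatrix}\bigr)$, positive semidefiniteness gives $d\ge c^2$, the subordination gives $d\le(b+B)c-bB$, and together these force $(c-b)(c-B)\le0$, i.e.\ $c\in[b,B]$; these three constraints are exactly the first- and second-moment conditions for a probability measure on $[b,B]$. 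The jump analysis and the sharpness-by-embedding-discrete-extremizers argument are also fine.

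The genuine gap is the one you yourself isolate: you do not construct $U_{p,b,B}$. This is not a technicality but the entire substance of the theorem---everything downstream is routine once $U$ exists with properties (i)--(iii). Burkholder's $U$ handles $[-a,a]$ and Choi's handles $[0,1]$, but producing the special function for a general interval $[b,B]$, and identifying the resulting constant with the discrete $C_{p,b,B}$ of Definition~\ref{defC}, requires real work; that work is precisely the content of \cite{BanOse}. Your caveat about the affine pull-back is well taken: that shortcut yields only $\tfrac{B-b}{2}(p^*-1)+\tfrac{|B+b|}{2}$, not $C_{p,b,B}$. So your proposal is a correct reduction to the right hard problem, but as written it is not yet a proof.
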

Note that when $B=a>0$ and $b=-a$, we have the case of Theorem \ref{symm}.  As we shall see, \eqref{burkholder} will give the bound in \eqref{V=0} and \eqref{banoseestiamte} will give some extensions. 

For the general case when $V\not=0$ (for the  inequality \eqref{Schro}) none of the above results apply and we shall need a variation of the Burkholder-Gundy inequalities.  While this bound is not sharp, it applies to a wide class of processes and can even be used to study operators on manifolds acting on forms.  We shall comment more on this a little later. 
We start by  recalling the following domination inequality due to Lenglart \cite{Len} (See also Revuz-Yor \cite{RevYor}, p.162-163).

\begin{proposition}(Lenglart)
Let $(N_t)_{t \ge 0}$ be a positive adapted right-continuous process and $(A_t)_{t \ge 0}$ be an increasing process. Assume that for every bounded stopping time $\tau$,
\[
\mathbb{E} (N_\tau) \le \mathbb{E} (A_\tau).
\]
Then, for every $k \in (0,1)$,
\[
\mathbb{E} \left( \left(\sup_{0 \le t \le T} N_t \right)^k \right) \le \frac{2-k}{1-k} \mathbb{E} \left( A_T^k\right).
\]
\end{proposition}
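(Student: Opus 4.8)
The plan is to deduce the statement from a weak-type (distributional) inequality, which is then integrated against $k\lambda^{k-1}\,d\lambda$ via the layer-cake formula; the constant $\frac{2-k}{1-k}$ will emerge from that integration. Writing $N_T^{*}=\sup_{0\le t\le T}N_t$, the heart of the matter is the estimate
\begin{equation*}
\mathbb{P}\big(N_T^{*}>\lambda\big)\le\frac{1}{\lambda}\,\mathbb{E}\big(A_T\wedge\lambda\big)+\mathbb{P}\big(A_T\ge\lambda\big),\qquad\lambda>0.\tag{$\star$}
\end{equation*}

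Granting $(\star)$, the conclusion is routine bookkeeping. By the layer-cake formula $\mathbb{E}\big((N_T^{*})^{k}\big)=\int_0^{\infty}k\lambda^{k-1}\mathbb{P}(N_T^{*}>\lambda)\,d\lambda$, and the contribution of $\mathbb{P}(A_T\ge\lambda)$ integrates to $\mathbb{E}(A_T^{k})$. For the remaining term, write $A_T\wedge\lambda=\int_0^{\lambda}\mathbf{1}_{\{s<A_T\}}\,ds$ and use Fubini together with $\int_s^{\infty}k\lambda^{k-2}\,d\lambda=\frac{k}{1-k}s^{k-1}$ (valid since $0<k<1$) to get $\int_0^{\infty}k\lambda^{k-2}\mathbb{E}(A_T\wedge\lambda)\,d\lambda=\frac{1}{1-k}\mathbb{E}(A_T^{k})$. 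Adding the two pieces yields $\big(1+\frac{1}{1-k}\big)\mathbb{E}(A_T^{k})=\frac{2-k}{1-k}\mathbb{E}(A_T^{k})$, which is exactly the asserted bound. (If $T$ is not a fixed finite time, one first runs the argument with $T\wedge n$ and lets $n\to\infty$ by monotone convergence.)

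To prove $(\star)$, fix $\lambda>0$ and introduce the stopping times $S=\inf\{t\ge 0:N_t>\lambda\}$ and $R=\inf\{t\ge 0:A_t\ge\lambda\}$, and set $\tau=S\wedge R\wedge T$, which is bounded. Since $A$ is increasing and right-continuous, $\{R\le T\}=\{A_T\ge\lambda\}$, so it suffices to control $\mathbb{P}(E)$ where $E:=\{N_T^{*}>\lambda\}\cap\{R>T\}$. On $E$ we have $S\le T<R$, hence $\tau=S$, and right-continuity of $N$ gives $N_\tau=N_S\ge\lambda$; the hypothesis therefore yields $\lambda\,\mathbb{P}(E)\le\mathbb{E}(N_\tau\mathbf{1}_E)\le\mathbb{E}(N_\tau)\le\mathbb{E}(A_\tau)$. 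On the other hand $A_\tau\le A_{R\wedge T}$, and $A_{R\wedge T}=A_T\le\lambda$ on $\{R>T\}$ while $A_{R\wedge T}=A_R=\lambda$ on $\{R\le T\}$ when $A$ is continuous (the case relevant to all the applications here, where $A$ is a quadratic variation of a continuous martingale or a Lebesgue integral); hence $\mathbb{E}(A_\tau)\le\mathbb{E}(A_T\wedge\lambda)$. Combining with $\mathbb{P}(N_T^{*}>\lambda)\le\mathbb{P}(E)+\mathbb{P}(R\le T)=\mathbb{P}(E)+\mathbb{P}(A_T\ge\lambda)$ gives $(\star)$. For a general increasing $A$ with jumps the only change is to absorb $A_R-\lambda\le\Delta A_R$ on $\{R\le T\}$, handled by the standard refinement in Lenglart's original argument.

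The main obstacle is establishing $(\star)$: one must choose a single stopping time $\tau$ for which the hypothesis $\mathbb{E}(N_\tau)\le\mathbb{E}(A_\tau)$ simultaneously produces the sharp lower bound $\lambda\,\mathbb{P}(N_T^{*}>\lambda,\,A_T<\lambda)$ on the left — this is precisely where right-continuity of $N$ is used — and the \emph{truncated} upper bound $\mathbb{E}(A_T\wedge\lambda)$ on the right. The truncation is indispensable, since without it the $\lambda$-integral diverges near $0$; and it is the matching of the truncation level to the threshold $\lambda$ that is responsible for the value $\frac{2-k}{1-k}$ of the final constant.
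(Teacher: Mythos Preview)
Your argument is correct and is precisely the standard proof of Lenglart's domination inequality: establish the weak-type bound $(\star)$ by stopping at $S\wedge R\wedge T$, then integrate via the layer-cake formula to extract the constant $\frac{2-k}{1-k}$. The computation is clean, and your caveat that the case of continuous $A$ suffices for the applications in the paper (where $A$ is always an integral against $d\langle M\rangle_t$ for a continuous martingale $M$) is well placed.

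That said, there is nothing to compare against: the paper does not prove this proposition. It is stated as a quotation of Lenglart's result, with references to the original paper and to Revuz--Yor, and is used as a black box in the proof of Theorem~\ref{BDG}. Your write-up therefore supplies exactly the argument the paper outsources to those references.
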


We shall use this lemma to prove the following 

\begin{theorem}\label{BDG}
 
Let $T>0$ and $(M_t)_{ 0 \le t \le T}$ be a continuous local martingale. 
Consider the process
\[
Z_t=e^{\int_0^t V_s ds} \int_0^t e^{-\int_0^s V_u du} dM_s,
\]
where $(V_t)_{ 0 \le t \le T}$ is a non positive  adapted and continuous process.  
For every $0<p<\infty$, there is a universal constant $C_p$, independent of $T$, $(M_t)_{ 0 \le t \le T}$  and $(V_t)_{ 0 \le t \le T}$ such that
\[
 \mathbb{E}\left(\left(\sup_{0 \le t \le T} |Z_t|\right)^p \right) \le C_p  \mathbb{E} \left( \langle M \rangle_T^{\frac{p}{2} } \right).
 \]
\end{theorem}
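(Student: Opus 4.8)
The plan is to use Lenglart's domination inequality with $N_t = |Z_t|^2$ (or a suitable power), after first establishing a pointwise/stopping-time estimate controlling the second moment of $Z_\tau$ by the expected quadratic variation of $M$. The key observation is that, by It\^o's formula applied to $Z_t = e^{\int_0^t V_s\,ds}\int_0^t e^{-\int_0^s V_u\,du}\,dM_s$, the process $Z$ satisfies the linear SDE $dZ_t = V_t Z_t\,dt + dM_t$, with $Z_0 = 0$. Applying It\^o to $Z_t^2$ gives
\[
Z_t^2 = 2\int_0^t Z_s\,dM_s + 2\int_0^t V_s Z_s^2\,ds + \langle M\rangle_t.
\]
Since $V_s \le 0$, the drift term $2\int_0^t V_s Z_s^2\,ds$ is nonpositive, so $Z_t^2 + 2\int_0^t |V_s| Z_s^2\,ds = 2\int_0^t Z_s\,dM_s + \langle M\rangle_t$. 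First I would localize by a sequence of stopping times reducing $\int_0^\cdot Z_s\,dM_s$ to a true martingale; then for any bounded stopping time $\tau$ we obtain $\mathbb{E}(Z_\tau^2) \le \mathbb{E}(\langle M\rangle_\tau) \le \mathbb{E}(\langle M\rangle_T)$, and more usefully $\mathbb{E}(Z_\tau^2) \le \mathbb{E}(\langle M\rangle_\tau)$ with the right-hand side an increasing process. This is exactly the hypothesis of Lenglart's inequality with $N_t = Z_t^2$ and $A_t = \langle M\rangle_t$.

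For $0 < p < 2$, set $k = p/2 \in (0,1)$; Lenglart gives directly
\[
\mathbb{E}\Bigl(\bigl(\sup_{0\le t\le T}|Z_t|\bigr)^p\Bigr)
= \mathbb{E}\Bigl(\bigl(\sup_{0\le t\le T}Z_t^2\bigr)^{p/2}\Bigr)
\le \frac{2 - p/2}{1 - p/2}\,\mathbb{E}\bigl(\langle M\rangle_T^{p/2}\bigr),
\]
which is the claim with $C_p = (2-p/2)/(1-p/2)$. For $p \ge 2$, Lenglart does not apply with $N = Z^2$, so instead I would go back to the identity above and use the Burkholder--Davis--Gundy inequality for the martingale part: from $\sup_t |Z_t|^2 \le 2\sup_t\bigl|\int_0^t Z_s\,dM_s\bigr| + \langle M\rangle_T$ (using $V \le 0$ again to drop the drift), take $L^{p/2}$ norms, apply BDG to control $\mathbb{E}\bigl(\sup_t|\int_0^t Z_s\,dM_s|^{p/2}\bigr)$ by $\mathbb{E}\bigl((\int_0^T Z_s^2\,d\langle M\rangle_s)^{p/4}\bigr) \le \mathbb{E}\bigl((\sup_t Z_t^2)^{p/4}\langle M\rangle_T^{p/4}\bigr)$, and finish with Young's inequality to absorb the resulting $\mathbb{E}((\sup_t|Z_t|)^p)$ term (finite after localization) into the left-hand side. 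This yields $\mathbb{E}((\sup_t|Z_t|)^p) \le C_p\,\mathbb{E}(\langle M\rangle_T^{p/2})$ with $C_p$ depending only on $p$ through the BDG constant.

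The main obstacle is the bookkeeping in the $p \ge 2$ case: one must localize carefully so that all expectations are a priori finite before absorbing a term onto the left side, and then pass to the limit (by Fatou on the left, monotone/dominated convergence on the right) to remove the localizing sequence; there is also a mild subtlety in that $C_p$ should be shown genuinely independent of $T$, $M$, and $V$, which follows because the only inequalities used (It\^o sign of the drift, BDG, Young, Lenglart) have constants depending on $p$ alone. A unified alternative, avoiding the case split, is to note that the stopping-time bound $\mathbb{E}(Z_\tau^2) \le \mathbb{E}(\langle M\rangle_\tau)$ together with the maximal estimate can be bootstrapped, but the two-case argument above is cleanest and I would present it that way.
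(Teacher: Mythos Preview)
Your proof is correct but takes a genuinely different route from the paper's. Both arguments start from the SDE $dZ_t = V_t Z_t\,dt + dM_t$ and exploit $V \le 0$ via It\^o's formula, but the paper applies It\^o to $|Z_t|^q$ for a general $q \ge 2$ (rather than just $q=2$), obtaining
\[
\mathbb{E}\bigl(|Z_\tau|^q\bigr) \le \tfrac{q(q-1)}{2}\,\mathbb{E}\Bigl(\int_0^\tau |Z_s|^{q-2}\,d\langle M\rangle_s\Bigr),
\]
and then applies Lenglart with $N_t = |Z_t|^q$ and $A_t = \tfrac{q(q-1)}{2}\int_0^t|Z_s|^{q-2}\,d\langle M\rangle_s$. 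The resulting right-hand side is controlled by $\mathbb{E}\bigl((\sup|Z|)^{kq}\bigr)^{1-2/q}\mathbb{E}\bigl(\langle M\rangle_T^{kq/2}\bigr)^{2/q}$ via H\"older, and setting $p = kq$ with $k \in (0,1)$, $q \ge 2$ covers every $p>0$ in one stroke and yields the absorption directly. So the paper's argument is uniform in $p$ and uses only Lenglart, never invoking the Burkholder--Davis--Gundy inequalities (fitting, since the theorem is itself a BDG-type statement). Your approach, by contrast, is cleaner for $p<2$---Lenglart with $N=Z^2$ and $A=\langle M\rangle$ gives the result with no bootstrapping and the explicit constant $(4-p)/(2-p)$---but for $p\ge 2$ you import BDG for the auxiliary martingale $\int Z\,dM$ and close with a quadratic inequality in $\|\sup|Z|\|_p$; this is essentially the classical proof pattern for BDG itself. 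Both arguments require the same localization care to justify the absorption step, which you correctly flag.
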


\begin{proof}
By stopping it is enough to prove the result for bounded $M$.
Let $q \ge 2$. From It\^o's formula we have
\[
dZ_t=Z_t V_t dt+ dM_t
\]
and 
\begin{align*}
d |Z_t|^q & =q|Z_t|^{q-1} \mathbf{sgn}(Z_t) dZ_t +\frac{1}{2} q(q-1) | Z_t |^{q-2} d<M>_t \\
 & =q|Z_t|^q V_t dt+q  \mathbf{sgn}(Z_t)|Z_t|^{q-1}dM_t+ \frac{1}{2} q(q-1) | Z_t |^{q-2}d<M>_t.
\end{align*}
Since $V_t \le 0$, as a consequence of the Doob's optional sampling theorem, we get that for every bounded stopping time $\tau$,
\[
\mathbb{E} \left(  |Z_\tau|^q  \right) \le  \frac{1}{2} q(q-1) \mathbb{E} \left( \int_0^\tau | Z_t |^{q-2} d<M>_t \right).
\]
From the Lenglart's domination inequality, we deduce then that for every $k \in (0,1)$,
\[
\mathbb{E} \left( \left(\sup_{0 \le t \le T}  |Z_t|^q  \right)^k \right) \le \frac{2-k}{1-k}  \left( \frac{1}{2} q(q-1)\right)^k  \mathbb{E} \left(\left( \int_0^T | Z_t |^{q-2}  d<M>_t\right)^k \right).
\]
We now bound
\begin{align*}
 \mathbb{E} \left(\left( \int_0^T | Z_t |^{q-2} d<M>_t\right)^k \right) & \le  \mathbb{E} \left(\left(\sup_{0 \le t \le T} |Z_t| \right)^{k(q-2)}\left( \int_0^T d<M>_t\right)^k \right) \\
  & \le  \mathbb{E} \left(\left(\sup_{0 \le t \le T} |Z_t| \right)^{kq} \right)^{1-\frac{2}{q}} \mathbb{E}  \left(<M>_T^{\frac{kq}{2}} \right)^{\frac{2}{q}}.
\end{align*}
As a consequence, we obtain
\begin{align*}
\mathbb{E} \left( \left(\sup_{0 \le t \le T}  |Z_t|^q  \right)^k \right) \le \frac{2-k}{1-k}  \left( \frac{1}{2} q(q-1)\right)^k   \mathbb{E} \left(\left(\sup_{0 \le t \le T} |Z_t| \right)^{kq} \right)^{1-\frac{2}{q}} \mathbb{E}  \left(<M>_T^{\frac{kq}{2}} \right)^{\frac{2}{q}}.
\end{align*}
Letting  $p=qk$ yields the claimed result.
\end{proof}

We should note here that in the above proof the fixed time $T$ can be replaced by a stopping time.  More interesting and useful for applications is the fact that this result admits the following  multidimensional generalization whose proof we leave to the interested reader.   

\begin{theorem}\label{GeneralB-G}
 Let $T>0$ and $(M_t)_{ 0 \le t \le T}$ be an $\mathbb{R}^d$ valued continuous local martingale. 
Consider the solution of the matrix equation
\[
d\mathcal{M}_t =\mathcal{V}_t \mathcal{M}_t dt, \quad \mathcal{M}_0=\mathbf{Id},
\]
where $(\mathcal{V}_t)_{0 \le t \le T}$ is an adapted and continuous process taking values in the set of symmetric and non positive $d \times d$ matrices. Consider the process
\[
Z_t=\mathcal{M}_t \int_0^t \mathcal{M}^{-1}_sdM_s.
\]
 For every $0<p<\infty$, there is a universal constant $C_p$, that is independent from $d$,  $T$, $(M_t)_{ 0 \le t \le T}$  and $(\mathcal{V}_t)_{ 0 \le t \le T}$ such that
\begin{align}\label{multi}
 \mathbb{E}\left(\left(\sup_{0 \le t \le T} \| Z_t \| \right)^p \right) \le C_p  \mathbb{E} \left( \| \langle M \rangle_T\|^{\frac{p}{2} } \right).
 \end{align}
 \end{theorem}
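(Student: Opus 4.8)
The plan is to mimic the scalar proof of Theorem \ref{BDG} verbatim, replacing absolute values by the Euclidean norm $\|\cdot\|$ on $\mathbb{R}^d$ and using the positivity of $-\mathcal{V}_t$ in place of the sign condition $V_t\le 0$. First I would reduce, by stopping, to the case where $M$ is a bounded martingale, so that all the local martingale terms appearing below are genuine martingales and Doob's optional sampling theorem applies. From the matrix ODE $d\mathcal{M}_t=\mathcal{V}_t\mathcal{M}_t\,dt$ and the definition $Z_t=\mathcal{M}_t\int_0^t\mathcal{M}_s^{-1}\,dM_s$, It\^o's formula gives the linear SDE
\[
dZ_t=\mathcal{V}_tZ_t\,dt+dM_t,
\]
exactly the vector analogue of $dZ_t=Z_tV_t\,dt+dM_t$.

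Next, for $q\ge 2$ I would apply It\^o's formula to $\|Z_t\|^q=(\langle Z_t,Z_t\rangle)^{q/2}$. The drift coming from the $dt$ part of $dZ_t$ is $q\|Z_t\|^{q-2}\langle Z_t,\mathcal{V}_tZ_t\rangle$, which is $\le 0$ since $\mathcal{V}_t$ is symmetric non-positive; the $dM_t$ part contributes a local martingale; and the second-order (bracket) term is controlled by $\tfrac12 q(q-1)\|Z_t\|^{q-2}\,d\,\mathrm{tr}\langle M\rangle_t$, where $\mathrm{tr}\langle M\rangle_t$ plays the role of the scalar $\langle M\rangle_t$. Discarding the non-positive drift and taking expectations at a bounded stopping time $\tau$ yields
\[
\mathbb{E}\bigl(\|Z_\tau\|^q\bigr)\le \tfrac12 q(q-1)\,\mathbb{E}\Bigl(\int_0^\tau \|Z_t\|^{q-2}\,d\,\mathrm{tr}\langle M\rangle_t\Bigr),
\]
which is the direct analogue of the scalar estimate.

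From here the argument is identical to the scalar case: apply Lenglart's domination inequality (the Proposition above) with $N_t=\|Z_t\|^q$ and $A_t=\tfrac12 q(q-1)\int_0^t\|Z_s\|^{q-2}\,d\,\mathrm{tr}\langle M\rangle_s$ to get, for $k\in(0,1)$,
\[
\mathbb{E}\Bigl(\bigl(\sup_{0\le t\le T}\|Z_t\|^q\bigr)^k\Bigr)\le \frac{2-k}{1-k}\Bigl(\tfrac12 q(q-1)\Bigr)^k \mathbb{E}\Bigl(\bigl(\int_0^T\|Z_t\|^{q-2}\,d\,\mathrm{tr}\langle M\rangle_t\bigr)^k\Bigr),
\]
then bound $\int_0^T\|Z_t\|^{q-2}\,d\,\mathrm{tr}\langle M\rangle_t\le(\sup_t\|Z_t\|)^{q-2}\,\mathrm{tr}\langle M\rangle_T$ and apply H\"older with exponents $\tfrac{q}{q-2}$ and $\tfrac q2$. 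Absorbing the factor $\mathbb{E}\bigl((\sup_t\|Z_t\|)^{kq}\bigr)^{1-2/q}$ on the left (legitimate once we know it is finite, which holds since $M$ is bounded) and setting $p=qk$ gives
\[
\mathbb{E}\bigl((\sup_{0\le t\le T}\|Z_t\|)^p\bigr)\le C_p\,\mathbb{E}\bigl(\mathrm{tr}\langle M\rangle_T^{p/2}\bigr),
\]
and since $\mathrm{tr}\langle M\rangle_T$ and $\|\langle M\rangle_T\|$ are comparable up to a dimension-free constant (both lie between $\|\langle M\rangle_T\|$ and $d\|\langle M\rangle_T\|$, but one can also phrase everything directly in terms of $\mathrm{tr}$, or note that the matrix $\langle M\rangle_T$ is non-negative so its trace is bounded by $\sqrt d$ times its operator norm—either way the constant does not depend on $d$ if one is slightly careful), the bound \eqref{multi} follows. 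The only genuine point requiring attention—the "main obstacle," such as it is—is making sure the constant stays independent of $d$: this is why one works with $\mathrm{tr}\langle M\rangle$ rather than summing the one-dimensional estimates coordinatewise, since the latter would introduce a factor growing with $d$. Everything else is a routine transcription of the scalar proof.
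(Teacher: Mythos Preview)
The paper does not actually prove this theorem---it is stated as a ``multidimensional generalization whose proof we leave to the interested reader''---so your plan of transcribing the scalar argument is exactly what is intended, and everything up through the H\"older step is correct: the It\^o expansion of $\|Z_t\|^q$, the sign of the drift $q\|Z_t\|^{q-2}\langle Z_t,\mathcal V_tZ_t\rangle\le 0$, the bound $\tfrac12 q(q-1)\|Z_t\|^{q-2}\,d\,\mathrm{tr}\langle M\rangle_t$ on the bracket term, and the Lenglart/H\"older manipulation all go through unchanged.

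The gap is in your last paragraph. You correctly arrive at the dimension-free estimate
\[
\mathbb{E}\Bigl(\bigl(\sup_{0\le t\le T}\|Z_t\|\bigr)^p\Bigr)\le C_p\,\mathbb{E}\bigl((\mathrm{tr}\langle M\rangle_T)^{p/2}\bigr),
\]
but then try to replace $\mathrm{tr}\langle M\rangle_T$ by the operator norm $\|\langle M\rangle_T\|$ with a dimension-free constant. That is impossible: for a non-negative $d\times d$ matrix one has only $\|A\|\le\mathrm{tr}\,A\le d\,\|A\|$, and the factor $d$ is sharp (take $A=I$); your $\sqrt d$ claim is simply wrong. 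In fact, with the operator norm the inequality \eqref{multi} is \emph{false} for a dimension-free $C_p$: take $\mathcal V\equiv 0$ and let $M$ be a standard $d$-dimensional Brownian motion on $[0,T]$, so that $Z_t=B_t$; then $\mathbb{E}\bigl((\sup_{t\le T}\|B_t\|)^p\bigr)$ is of order $(dT)^{p/2}$ while $\|\langle M\rangle_T\|=\|T\cdot I\|=T$. The correct reading of \eqref{multi}, and the only one consistent with the dimension-independence asserted in the statement, is with $\|\langle M\rangle_T\|$ interpreted as the trace $\sum_i\langle M^i\rangle_T$; with that reading your proof is complete.
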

 
 \begin{remark}\label{correction}
 Theorem \ref{GeneralB-G} may be used to correct a gap contained in the papers \cite{Li1}, \cite{Li2}, \cite{Li3} of  X. D. Li. In these papers the author considers quantities like  $\mathcal{M}_t \int_0^t \mathcal{M}^{-1}_sdM_s$ but writes them  as $\ \int_0^t \mathcal{M}_t \mathcal{M}^{-1}_sdM_s$ to give $L^p$ estimates using the classical Burkholder-Gundy  inequality or even explicit expressions for the constants using the Burkholder bounds of Theorem \ref{symm}.  This, however, is not possible due to the non-adaptedness of the process  $\mathcal{M}_t \mathcal{M}^{-1}_s$ which prevents us from bringing the $\mathcal{M}_t$ inside the integral.  Therefore, in those papers, many of the explicit constants given there involving $(p^*-1)$ need to be replaced with  less precise universal constants $C_p$ depending only on $p$ as in (\ref{multi}).
 \end{remark}

\section{Proof of Theorem \ref{main} and some refinements}


Consider the Schr\"odinger operator  $L=\frac{1}{2} \sum_{i=1}^d X^2_i+X_0+V$. The diffusion $(Y_t)_{t \ge 0}$ with generator $\frac{1}{2} \sum_{i=1}^d X^2_i+X_0$  can then be constructed via the Stratonovitch stochastic differential equation
\[
dY_t =X_0(Y_t) dt + \sum_{i=1}^d X_i(Y_t) \circ dB^i_t,
\]
which we assume non--explosive. In the sequel we shall denote by $(Y_t)_{t \ge 0}$ the solution of this equation started with an initial \textit{distribution} $\mu$.  We will use $E_x$ to denote the expectation associated with the process $Y$starting at $x$ and $E$ to denote the expectation of the process starting with $\mu$.  (See \cite{Ban} for some literature related to the possible intricacies associated to the process starting with the possibly infinite measure $\mu$.) 

Let us recall that by the Feynman-Kac formula, the semigroup $P_t$ acting on $f \in C_0^\infty (\mathbb{M})$ can be written as

$$
P_tf(x)=E_x\left( e^{\int_0^t V(Y_s)ds} f(Y_t)\right). 
$$

%
%
%
%
Let us fix $T>0$ in what follows. We first consider the relevant martingales associated to our operators. 
We have 

\begin{lemma}\label{martingale}
Let $f \in C_0^\infty (\mathbb{M})$. The process $\left( e^{\int_0^t V(Y_s)ds} (P_{T-t} f)(Y_t) \right)_{0 \le t \le T}$ is a martingale and we have
\[
e^{\int_0^T V(Y_s)ds} f(Y_T)=(P_{T} f)(Y_0)+ \sum_{i=1}^d \int_0^T e^{\int_0^t V(Y_s)ds} (X_i P_{T-t} f)(Y_t)dB^i_t .
\]
\end{lemma}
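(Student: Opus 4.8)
The plan is to apply It\^o's formula to the space-time function $(t,x) \mapsto e^{\int_0^t V(Y_s)ds}(P_{T-t}f)(x)$ evaluated along the diffusion $(Y_t)_{0\le t\le T}$, and to observe that the $dt$-terms cancel, leaving only a stochastic integral; the martingale property then follows. First I would note that since $f \in C_0^\infty(\mathbb{M})$ and the semigroup is generated by $L = \frac12\sum_i X_i^2 + X_0 + V$, the function $u(t,x) := (P_{T-t}f)(x)$ is smooth and solves the backward equation $\partial_t u = -L u$ on $[0,T]\times\mathbb{M}$, i.e. $\partial_t u + \frac12\sum_i X_i^2 u + X_0 u + V u = 0$. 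Writing $\Phi_t := e^{\int_0^t V(Y_s)ds}$, which has bounded variation with $d\Phi_t = \Phi_t V(Y_t)\,dt$, I would compute $d\big(\Phi_t u(t,Y_t)\big)$ by the product rule together with the It\^o formula applied to $u(t,Y_t)$. Using the Stratonovitch SDE $dY_t = X_0(Y_t)\,dt + \sum_i X_i(Y_t)\circ dB^i_t$, the It\^o formula for $u(t,Y_t)$ produces the drift $\partial_t u(t,Y_t) + \big(\frac12\sum_i X_i^2 u + X_0 u\big)(t,Y_t)$ (the Stratonovitch correction exactly converting $\sum_i (X_i u)\,dB^i$ into the generator's second-order part) plus the martingale part $\sum_i (X_i u)(t,Y_t)\,dB^i_t$.

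Assembling these, the finite-variation part of $d\big(\Phi_t u(t,Y_t)\big)$ is
\[
\Phi_t\Big(\partial_t u + \tfrac12\sum_{i=1}^d X_i^2 u + X_0 u + V u\Big)(t,Y_t)\,dt,
\]
which vanishes identically by the backward equation above. Hence
\[
d\big(\Phi_t u(t,Y_t)\big) = \sum_{i=1}^d \Phi_t\,(X_i u)(t,Y_t)\,dB^i_t = \sum_{i=1}^d e^{\int_0^t V(Y_s)ds}\,(X_i P_{T-t}f)(Y_t)\,dB^i_t,
\]
and integrating from $0$ to $T$, using $u(0,Y_0) = (P_T f)(Y_0)$ and $u(T,Y_T) = f(Y_T)$, gives precisely the stated identity. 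The martingale property of $\big(\Phi_t (P_{T-t}f)(Y_t)\big)_{0\le t\le T}$ then follows once the stochastic integral is shown to be a genuine martingale (not merely local).

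The main obstacle is exactly this integrability point: one must verify that $\sum_i \int_0^\cdot \Phi_t (X_i P_{T-t}f)(Y_t)\,dB^i_t$ is a true martingale, which requires control on $(X_i P_{T-t}f)(Y_t)$ uniformly in $t\in[0,T]$. Since $V\le 0$ we have $0 < \Phi_t \le 1$, so it suffices to bound $\mathbb{E}\int_0^T |(X_i P_{T-t}f)(Y_t)|^2\,dt < \infty$. For $f \in C_0^\infty(\mathbb{M})$ and $T$ fixed this is standard from smoothness of the semigroup kernel and non-explosion of $Y$ (or, more robustly, one first localizes by a sequence of stopping times $\tau_n \uparrow \infty$ exhausting $\mathbb{M}$ by relatively compact sets, establishes the identity up to $\tau_n \wedge T$ where everything is bounded, and then passes to the limit using dominated convergence and the $L^2$ bound). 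I would present the computation cleanly for bounded data and remark that the general $f \in C_0^\infty(\mathbb{M})$ case follows by this localization; the cancellation of the drift is the heart of the argument and is immediate from the backward heat equation.
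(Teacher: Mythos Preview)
Your proposal is correct and follows essentially the same approach as the paper: set $Z_t^1 = e^{\int_0^t V(Y_s)\,ds}$ and $Z_t^2 = (P_{T-t}f)(Y_t)$ and apply the It\^o product formula, observing that the drift cancels by the backward equation for $P_{T-t}f$. The paper's proof is just that one-line sketch; your additional discussion of the integrability needed to upgrade the local martingale to a true martingale is a welcome clarification that the paper leaves implicit.
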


The proof of the lemma is clear.  For $0\leq t\leq T$, set 
$$
Z_t^1=e^{\int_0^t V(Y_s)ds}, \quad Z_t^2=(P_{T-t} f)(Y_t) 
$$
and apply the It\^o formula.  

The next expression provides the probabilistic connection to our operators. For $f \in C_0^\infty (\mathbb{M})$, $0<T<\infty$, set
\[
\mathcal{S}^T_A f (x) =\mathbb{E} \left(  e^{\int_0^T V(Y_s)ds} \sum_{i,j=1}^d \int_0^T e^{-\int_0^t V(Y_s)ds} A_{ij}(T-t,Y_t) (X_jP_{T-t} f)(Y_t)dB^i_t  \mid Y_T=x \right).
\]
If $g$ is another function in  $C_0^\infty (\mathbb{M})$, the It\^o isometry and the lemma give 
\begin{eqnarray*}
\int_\bM (\mathcal{S}^T_A f ) g d\mu &=&\mathbb{E} \left(  e^{\int_0^T V(Y_s)ds} \sum_{i,j=1}^d \int_0^T e^{-\int_0^t V(Y_s)ds} A_{ij}(T-t,Y_t) (X_jP_{T-t} f)(Y_t)dB^i_t  g(Y_T) \right) \\ 
 &=& \sum_{i,j=1}^d \mathbb{E} \left(  e^{\int_0^T V(Y_s)ds}  g(X_T) \int_0^T e^{-\int_0^t V(Y_s)ds} A_{ij}(T-t,Y_t) (X_jP_{T-t} f)(Y_t)dB^i_t   \right) \\
  &= &\sum_{i,j=1}^d \mathbb{E} \left(  \int_0^T  A_{ij}(T-t,Y_t)(X_iP_{T-t} g)(Y_t) (X_jP_{T-t} f)(Y_t)dt  \right) \\
 &=&\sum_{i,j=1}^d \int_0^{T} \int_{\mathbb{M}}  A_{ij}(t)(X_iP_{t} g) (X_jP_{t} f) d\mu dt.
\end{eqnarray*}
Therefore, we have
\begin{equation}\label{S_A^T}
\mathcal{S}^T_A f =\sum_{i,j=1}^d \int_0^T (P_t X_i^* A_{ij}(t) X_j P_t f) dt.
\end{equation}

\begin{lemma}\label{mainT}  For any $1<p<\infty$, there is a constant $C_p$ depending only on $p$ such that for every $f \in L_\mu^p(\mathbb{M})$,
\begin{equation}\label{SchroT}
\| \mathcal{S}_A^T f \| \le C_p \|A \| \| f \|_p.
\end{equation}
If the potential $V\equiv 0$, then 
\begin{equation}\label{V=0-T} 
\| \mathcal{S}_A^T f \| \le (p^*-1) \|A \| \| f \|_p. 
\end{equation}
In particular these constants do not depend on $T$. 
\end{lemma}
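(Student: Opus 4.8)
The plan is to realize $\mathcal{S}_A^T f$ as a conditional expectation of a martingale transform (in the non-symmetric sense of Theorem \ref{non-symm}, and in the symmetric sense of Theorem \ref{symm} when $V\equiv 0$), and then to pass from the $L^p$ bound for the transformed martingale to the $L^p$ bound for the operator by a duality argument, exactly as in the computation preceding the statement. First I would fix $f,g\in C_0^\infty(\mathbb{M})$ with $\|g\|_q\le 1$, where $q$ is the conjugate exponent, and introduce the two martingales
\[
X_t = e^{\int_0^t V(Y_s)ds}(P_{T-t}f)(Y_t),\qquad
Y_t^{(g)} = e^{\int_0^t V(Y_s)ds}(P_{T-t}g)(Y_t),
\]
whose martingale representations are given by Lemma \ref{martingale}. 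The key observation is that the process
\[
N_t = e^{\int_0^t V(Y_s)ds}\sum_{i,j=1}^d \int_0^t e^{-\int_0^s V(Y_u)du}A_{ij}(T-s,Y_s)(X_jP_{T-s}f)(Y_s)\,dB^i_s
\]
appearing inside the conditional expectation defining $\mathcal{S}_A^T f$ is (for $V\equiv 0$) precisely a martingale differentially subordinate to $\|A\|\,X_t$, since at the level of the quadratic variation one has $d\langle N\rangle_t = |A(T-t,Y_t)\sigma_t|^2\,dt \le \|A\|^2\,d\langle X\rangle_t$, where $\sigma_t$ is the vector with components $(X_jP_{T-t}f)(Y_t)$; the exponential weights $e^{\pm\int_0^t V}$ cancel in $d\langle N\rangle_t$ when $V\equiv 0$.

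For the case $V\equiv 0$: apply Theorem \ref{symm} to conclude $\|N_T\|_p \le (p^*-1)\|A\|\,\|X\|_p = (p^*-1)\|A\|\,\|f\|_p$, using that $\|X\|_p=\sup_t\|P_{T-t}f(Y_t)\|_p\le\|f\|_p$ by the contraction property of the heat semigroup on $L^p_\mu$ (which follows since $Y$ is started from $\mu$ and $P_t$ is Markovian and symmetric). Then
\[
\Big|\int_\mathbb{M}(\mathcal{S}_A^T f)\,g\,d\mu\Big|
= |\mathbb{E}(N_T\, g(Y_T))|
\le \|N_T\|_p\,\|g(Y_T)\|_q
\le (p^*-1)\|A\|\,\|f\|_p\,\|g\|_q,
\]
and taking the supremum over such $g$ gives \eqref{V=0-T}. (One may also invoke Theorem \ref{non-symm} with $b,B$ the extreme eigenvalues of the symmetrized matrix to get the sharper constant $C_{p,b,B}$ in the refinement mentioned after the statement.)

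For the general case $V\not\equiv 0$: here $N_t$ has the exact form $Z_t=e^{\int_0^t V_s ds}\int_0^t e^{-\int_0^s V_u du}\,dM_s$ of Theorem \ref{BDG} (with $V_s=V(Y_s)\le 0$ and $M$ the continuous local martingale $\sum_{i,j}\int_0^\cdot A_{ij}(T-s,Y_s)(X_jP_{T-s}f)(Y_s)\,dB^i_s$, for which $\langle M\rangle_T\le \|A\|^2\langle \tilde M\rangle_T$ with $\tilde M$ the martingale part of Lemma \ref{martingale}). Theorem \ref{BDG} with exponent $p$ yields $\mathbb{E}\big((\sup_t|Z_t|)^p\big)\le C_p\|A\|^p\,\mathbb{E}(\langle \tilde M\rangle_T^{p/2})$, and since $\tilde M$ is the martingale part of $X_t=e^{\int_0^t V}(P_{T-t}f)(Y_t)$ one has by Burkholder–Davis–Gundy $\mathbb{E}(\langle\tilde M\rangle_T^{p/2})\le C_p\,\mathbb{E}(\sup_t|X_t|^p)\le C_p\,\mathbb{E}(|X_T|^p)\cdot(\text{Doob})=C_p\,\mathbb{E}(|e^{\int_0^T V}f(Y_T)|^p)\le C_p\|f\|_p^p$, using $V\le 0$ for the last step and the fact that the law of $Y_T$ is $\mu$ (heat semigroup contraction). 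Then the same duality as above, with $\|N_T\|_p\le C_p\|A\|\|f\|_p$, gives \eqref{SchroT}. Finally, a density argument extends both inequalities from $C_0^\infty(\mathbb{M})$ to all of $L^p_\mu(\mathbb{M})$, and the constants are manifestly independent of $T$ since every estimate above is.

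The main obstacle I anticipate is the bookkeeping around the non-conservative semigroup and the starting measure $\mu$: one needs $\|P_{T-t}f(Y_t)\|_{L^p(\mathbb{P})}\le\|f\|_{L^p_\mu}$, which requires that $Y_t$ has law $\mu$ for each $t$ (stationarity of $\mu$ under the diffusion without potential) together with the $L^p$-contractivity of $P_t$; and in the $V\not\equiv0$ case one must be careful that the Feynman–Kac weight $e^{\int_0^T V}\le 1$ is used with the correct sign and that $X_t$ is genuinely a martingale (Lemma \ref{martingale}) so that Doob applies. A secondary technical point is the localization/stopping argument needed to reduce Theorem \ref{BDG} and the BDG inequalities to bounded martingales, and the justification of the It\^o isometry computation for the $L^2$-pairing that identified $\mathcal{S}_A^T$ with \eqref{S_A^T} — but these are routine given the hypotheses ($f\in C_0^\infty$, $A$ bounded smooth, non-explosion).
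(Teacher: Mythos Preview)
Your treatment of the case $V\equiv 0$ is correct and essentially the same as the paper's: differential subordination of the transformed martingale to $\|A\|\,X_t$, then Theorem~\ref{symm}, then contraction of conditional expectation (or, equivalently, your duality pairing with $g(Y_T)$).

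The case $V\not\equiv 0$ has a genuine gap. After Theorem~\ref{BDG} you need to control
\[
\mathbb{E}\!\left(\langle M\rangle_T^{p/2}\right)
\;\le\;\|A\|^p\,\mathbb{E}\!\left(\Big(\int_0^T\sum_i (X_iP_{T-t}f)^2(Y_t)\,dt\Big)^{p/2}\right),
\]
and you claim this is bounded by $\|A\|^p\,\mathbb{E}(\langle\tilde M\rangle_T^{p/2})$ with $\tilde M$ the martingale of Lemma~\ref{martingale}. But that martingale has
\[
\langle\tilde M\rangle_T \;=\;\int_0^T e^{2\int_0^t V(Y_s)ds}\sum_i (X_iP_{T-t}f)^2(Y_t)\,dt,
\]
and since $V\le 0$ the exponential weight is $\le 1$, so the inequality $\int_0^T|\sigma_t|^2\,dt\le\langle\tilde M\rangle_T$ is \emph{backwards}. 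If instead you mean the unweighted integral $\hat M_t=\sum_i\int_0^t(X_iP_{T-s}f)(Y_s)\,dB^i_s$, then $\langle M\rangle_T\le\|A\|^2\langle\hat M\rangle_T$ is true, but $\hat M$ is the martingale part of the \emph{semimartingale} $(P_{T-t}f)(Y_t)$, not of the Feynman--Kac martingale $X_t$; it carries the drift $-\!\int_0^t V(Y_s)(P_{T-s}f)(Y_s)\,ds$, so your BDG/Doob chain ``$\langle\tilde M\rangle_T\to\sup|X_t|\to|X_T|$'' does not apply to it.

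The paper closes this gap by a direct estimate on the \emph{unweighted} quadratic variation: applying It\^o's formula to $(P_{T-t}f)^2(Y_t)$ one computes
\[
\Big(\tfrac{1}{2}\textstyle\sum_i X_i^2+X_0+\partial_t\Big)(P_{T-t}f)^2
=\sum_i(X_iP_{T-t}f)^2-2V(P_{T-t}f)^2
\;\ge\;\sum_i(X_iP_{T-t}f)^2,
\]
which shows $(P_{T-t}f)^2(Y_t)$ is a submartingale whose increasing process dominates $\int_0^T\sum_i(X_iP_{T-t}f)^2(Y_t)\,dt$. For $p=2$ this immediately gives the $L^2$ bound; for $p>2$ one combines the Lenglart--L\'epingle--Pratelli inequality with Doob's maximal inequality to bound the $L^{p/2}$ norm of the increasing process by $\|f\|_p^p$, and $1<p<2$ follows by duality. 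This submartingale step is precisely the missing idea in your argument.
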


\begin{proof}  We first observe that if $V\equiv 0$ then the martingale 
$$
\sum_{i,j=1}^d \int_0^t A_{ij}(T-t,Y_t) (X_jP_{T-t} f)(Y_t)dB^i_t 
$$
is differentially subordinate to $\|A\| P_{T-t}f(Y_t).$  It Follows from the contraction of the conditional expectation on $L^p$, $1<p<\infty$, and Theorem \ref{symm}, that 
$$
\| \mathcal{S}_A^T f \| \le (p^*-1) \|A \| \| f \|_p,
$$
 which is the estimate in \eqref{V=0-T}. 

We now deal with the case of $V\not=0$. We first prove an $L^p$ estimates for 
$$\int_0^T \sum_{i=1}^d (X_iP_{T-t} f)^2 (Y_t)dt$$
 which is the quadratic variation of the martingale appearing in Lemma \ref{martingale}.  Using It\^o's formula with $0\leq t\leq T$ for $(P_{T-t}f)(Y_t)^2$, we obtain
\begin{eqnarray*}
f(Y_T)^2=P_Tf(Y_0)^2&+&\int_0^T \left( \frac{1}{2} \sum_{i=1}^d X_i^2 +X_0 +\frac{\partial}{\partial t} \right) (P_{T-t}f)^2 (Y_t) dt\\
& +&\sum_{i=1}^d \int_0^T X_i (P_{T-t}f)^2(Y_t)dB^i_t. 
\end{eqnarray*}
Therefore
\[
\mathbb{E} \left(\int_0^T \left( \frac{1}{2} \sum_{i=1}^d X_i^2 +X_0 +\frac{\partial}{\partial t} \right) (P_{T-t}f)^2 (Y_t)  dt \right) \le \| f\|_2^2.
\]
We now compute
\begin{align*}
 \left(  \frac{1}{2} \sum_{i=1}^d X_i^2 +X_0 +\frac{\partial}{\partial t} \right) (P_{T-t}f)^2&  = 2 (P_{T-t}f) \left( \frac{1}{2} \sum_{i=1}^d X_i^2 +X_0\right)P_{T-t}f  +\sum_{i=1}^d (X_iP_{T-t} f)^2  \\
 &-2 (P_{T-t}f)\left( \frac{1}{2} \sum_{i=1}^d X_i^2 +X_0 +V\right) P_{T-t}f \\
  &= \sum_{i=1}^d (X_iP_{T-t} f)^2 -2 V (P_{T-t}f)^2 \\
   & \ge \sum_{i=1}^d (X_iP_{T-t} f)^2.
\end{align*}
This implies that 
\begin{align*}
 \mathbb{E} \left(  \int_0^T \sum_{i=1}^d (X_iP_{T-t} f)^2(Y_t) dt \right) \le\mathbb{E} \left(\int_0^T \left( \frac{1}{2} \sum_{i=1}^d X_i^2 +X_0 +\frac{\partial}{\partial t} \right) (P_{T-t}f)^2 (Y_t)  dt \right)  \le \| f\|_2^2.
\end{align*}
Combining this with Theorem \ref{BDG} and again with the fact that the conditional expectation is a contraction on $L^2$, we obtain 
\[
\| \mathcal{S}^T_A f \|_2 \le \|A \| \| f \|_2, 
\]
which proves the result for $p=2$. 

Assume now that  $2<p<\infty$. The above computations show that $(P_{T-t}f)^2 (Y_t)$ is a sub-martingale. From the Lenglart-L\'epingle-Pratelli estimate  and Doob's maximal inequality, we deduce that 
\begin{eqnarray*}
\mathbb{E} \left( \left( \int_0^T \left(  \frac{1}{2} \sum_{i=1}^d X_i^2 +X_0 +\frac{\partial}{\partial t} \right) (P_{T-t}f)^2 (Y_t) dt \right)^{\frac{p}{2}} \right)& \le& p^{p/2}  \mathbb{E} \left( \sup_{0 \le t \le T} \left((P_{T-t}f)^2 (Y_t) \right)^{p/2} \right) \\
 & \le&  p^{p/2} \left( \frac{p}{p-2} \right)^{p/2} \mathbb{E}(f (Y_T)^p ) \\
 & \le & p^{p/2} \left( \frac{p}{p-2} \right)^{p/2}\| f \|_p^p.
\end{eqnarray*}
We conclude
\begin{eqnarray*}
 \mathbb{E} \left( \left( \int_0^T   \sum_{i=1}^d  (X_i P_{T-t} f)(Y_t)^2 dt \right)^{\frac{p}{2}} \right)
 & \le& \mathbb{E} \left( \left( \int_0^T \left( \frac{1}{2} \sum_{i=1}^d X_i^2 +X_0 +\frac{\partial}{\partial t} \right) (P_{T-t}f)^2 (Y_t) dt \right)^{\frac{p}{2}} \right) \\
  &  \le&  p^{p/2} \left( \frac{p}{p-2} \right)^{p/2}\| f \|_p^p
\end{eqnarray*}
Combining this with Theorem \ref{BDG}, proves the result in the range $2<p<\infty$. Finally, the adjoint of $\mathcal{S}^T_A$ acting on $L_\mu^p(\mathbb{M})$ is $\mathcal{S}^T_{A^*}$ acting on $L_\mu^q(\mathbb{M})$, where $\frac{1}{p}+\frac{1}{q}=1$. The result is then obtained by duality.
\end{proof}

%

\begin{proof}[Proof of Theorem \ref{main}]
It remains to show that we can let $T\to \infty$ in Lemma \ref{mainT}. To  see this observe that by applying the lemma with the matrix 
\[
A_{ij}(t,x)=\frac{(X_iP_t f )(x)(X_j P_t g)(x)}{\left(\sum_{i=1}^d (X_iP_tf)^2 (x)\right)^{1/2}
\left(\sum_{i=1}^d (X_iP_tg)^2(x)\right)^{1/2} }
\]
 we obtain the uniform bound
\begin{equation}
\int_0^{T} \int_{\mathbb{M}}  \left(\sum_{i=1}^d (X_iP_tf)^2 (x)\right)^{1/2}\left( \sum_{i=1}^d (X_iP_tg)^2(x)\right)^{1/2}d\mu dt \le a_p \| f \|_p \| g \|_q
\end{equation}
with $\frac{1}{p}+\frac{1}{q}=1$, where $a_p=C_p$ in the case of $V\not=0$ and otherwise it is $p^*-1$.  Since 
\begin{align*}
\int_\bM (\mathcal{S}^T_A f ) g d\mu=\sum_{i,j=1}^d \int_0^{T} \int_{\mathbb{M}}  A_{ij}(t)(X_iP_{t} g) (X_jP_{t} f) d\mu dt,
\end{align*}
we deduce that
\begin{align*}
\lim _{T \to \infty} \int_\bM (\mathcal{S}^T_A f ) g d\mu & =\sum_{i,j=1}^d \int_0^{+\infty} \int_{\mathbb{M}}  A_{ij}(t)(X_iP_{t} g) (X_jP_{t} f) d\mu dt  \\
 & =  \int_\bM (\mathcal{S}_A f ) g d\mu 
\end{align*}
and moreover that
\[
\left|  \int_\bM (\mathcal{S}_A f ) g d\mu \right| \le a_p \| A \|  \| f \|_p \| g \|_q.
\]
This shows that the same bounds in Lemma \ref{mainT} hold with $\mathcal{S}_A^T$ replaced by $\mathcal{S}_A$. This completes the proof of Theorem \ref{main}. 
\end{proof}

The following corollary of the above proof generalizes to manifolds the key estimate of Nazarov and Volberg \cite{NazVol} and Dragi\v cevi\'c and Volberg \cite{DraVol1}, \cite{DraVol2}, for the Laplacian in $\bR^d$; see \cite[Corollary 3.9.1]{Ban}. 

\begin{corollary}\label{NazVol-manifolds}
For all $f, g\in C_0^\infty (\mathbb{M})$, 
$1< p<\infty$,   
\begin{equation}
\int_0^{\infty} \int_{\mathbb{M}}\left(\sum_{i=1}^d (X_iP_tf)^2 (x)\right)^{1/2}
\left(\sum_{i=1}^d (X_iP_tg)^2(x)\right)^{1/2}d\mu dt \le a_p \| f \|_p \| g \|_q,
\end{equation}
with $\frac{1}{p}+\frac{1}{q}=1$, where $a_p=C_p$ as in \eqref{SchroT}, if $V\not=0$  and  $a_p=p^*-1$, if $V\equiv 0$. 
\end{corollary}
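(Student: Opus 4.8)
\textbf{Proof plan for Corollary \ref{NazVol-manifolds}.} The plan is to extract this estimate directly from the proof of Theorem \ref{main} rather than proving anything new: the corollary is essentially the uniform bound displayed in that proof, combined with the monotone passage $T \to \infty$. First I would fix $f, g \in C_0^\infty(\mathbb{M})$ and $1 < p < \infty$ with conjugate exponent $q$. The key observation is that the particular choice of matrix
\[
A_{ij}(t,x) = \frac{(X_i P_t f)(x)(X_j P_t g)(x)}{\left(\sum_{k=1}^d (X_k P_t f)^2(x)\right)^{1/2}\left(\sum_{k=1}^d (X_k P_t g)^2(x)\right)^{1/2}}
\]
(interpreted as $0$ wherever the denominator vanishes) is bounded with $\|A\| \le 1$, since it is a rank-one matrix of the form $\xi \eta^{T}$ with $\xi, \eta$ unit vectors, and it is smooth on the open set where the denominator is nonzero; a standard approximation handles the zero set. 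With this choice one computes pointwise
\[
\sum_{i,j=1}^d A_{ij}(t,x)(X_i P_t g)(x)(X_j P_t f)(x) = \left(\sum_{k=1}^d (X_k P_t f)^2(x)\right)^{1/2}\left(\sum_{k=1}^d (X_k P_t g)^2(x)\right)^{1/2}.
\]

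Next I would invoke the identity established just before Lemma \ref{mainT}, namely $\int_\bM (\mathcal{S}_A^T f) g\, d\mu = \sum_{i,j} \int_0^T \int_{\mathbb{M}} A_{ij}(t)(X_i P_t g)(X_j P_t f)\, d\mu\, dt$, together with the bound \eqref{SchroT} (resp. \eqref{V=0-T}) from Lemma \ref{mainT}, which gives $|\int_\bM (\mathcal{S}_A^T f) g\, d\mu| \le a_p \|A\| \|f\|_p \|g\|_q \le a_p \|f\|_p \|g\|_q$ for every $T$. Substituting the computed pointwise identity yields
\[
\int_0^{T} \int_{\mathbb{M}}\left(\sum_{k=1}^d (X_k P_t f)^2\right)^{1/2}\left(\sum_{k=1}^d (X_k P_t g)^2\right)^{1/2} d\mu\, dt \le a_p \|f\|_p \|g\|_q,
\]
uniformly in $T$. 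Since the integrand is nonnegative, the integral over $[0,T] \times \mathbb{M}$ increases to the integral over $[0,\infty) \times \mathbb{M}$ as $T \to \infty$, so by monotone convergence the same bound holds with $T$ replaced by $\infty$, which is exactly the claimed inequality.

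The only genuinely delicate point is justifying that Lemma \ref{mainT} may be applied to this specific $A$: the lemma is stated for bounded smooth $A_{ij}$, whereas the matrix above need only be Borel and is potentially discontinuous across the set where either gradient-square sum vanishes. I expect this to be the main (and only) obstacle, and it is handled exactly as in the proof of Theorem \ref{main} in the excerpt, where the identical matrix is used without further comment; one can either note that on the region where $P_t f$ or $P_t g$ is locally constant both sides vanish, or regularize by replacing the denominators with $\left(\varepsilon^2 + \sum_k (X_k P_t f)^2\right)^{1/2}$ etc., apply Lemma \ref{mainT} for each $\varepsilon > 0$, and let $\varepsilon \downarrow 0$ using monotone/dominated convergence. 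Everything else is a direct quotation of steps already carried out, so no new estimates are needed.
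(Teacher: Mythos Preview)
Your plan is exactly the paper's argument: pick the rank-one matrix, apply Lemma \ref{mainT} to get the bound uniformly in $T$, and then let $T\to\infty$ by monotone convergence; the paper presents the corollary as a direct byproduct of that same step in the proof of Theorem \ref{main}, with no additional work.

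One computational slip worth flagging (the paper's displayed matrix has the same issue): with $A_{ij}=(X_iP_tf)(X_jP_tg)\big/\big(\sum_k (X_kP_tf)^2\big)^{1/2}\big(\sum_k (X_kP_tg)^2\big)^{1/2}$ and the pairing identity $\sum_{i,j}A_{ij}(X_iP_tg)(X_jP_tf)$, the pointwise value is
\[
\frac{\big(\sum_k (X_kP_tf)(X_kP_tg)\big)^2}{\big(\sum_k (X_kP_tf)^2\big)^{1/2}\big(\sum_k (X_kP_tg)^2\big)^{1/2}},
\]
not the product of the two square roots. The fix is trivial: use the transpose $A_{ij}=(X_iP_tg)(X_jP_tf)\big/\big(\sum_k (X_kP_tf)^2\big)^{1/2}\big(\sum_k (X_kP_tg)^2\big)^{1/2}$, which still has $\|A\|\le 1$ and now makes your claimed pointwise identity literally correct.
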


We now state a result that follows from Theorem \ref{non-symm} when we have some additional information on the matrix $A=\left(A_{ij}\right)$.  Again, this is exactly as on $\bR^d$; see \cite{BanOse}. 

\begin{theorem}\label{non-symm-S_A}  Suppose $V\equiv 0$.  Let $A=\left(A_{ij}\right)$ be symmetric and suppose that there are universal constants  $-\infty<b<B<\infty$  such that for all $(t, x)\in [0, \infty)\times M$, 
$$b|\xi|^2\leq \sum_{i,j=1}^d A_{ij}(t, x)\xi_i\xi_j\leq B|\xi|^2,$$
for all $\xi\in \bR^d$.  Then for all $1<p<\infty$ we have 
\begin{equation}\label{non-symm-T}
\|\mathcal{S}_A f\|_p\leq C_{p,b,B} \|f\|_p,
\end{equation}
where $C_{p,b,B}$ is the constant in Theorem \ref{non-symm}. 
\end{theorem}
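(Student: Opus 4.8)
The plan is to follow the same route used for the symmetric case in Lemma~\ref{mainT} and the proof of Theorem~\ref{main}, but to replace the appeal to Theorem~\ref{symm} by the non-symmetric bound of Theorem~\ref{non-symm}. Since $V\equiv 0$, the Feynman--Kac weight disappears and Lemma~\ref{martingale} tells us that for $f\in C_0^\infty(\mathbb{M})$ the process $(P_{T-t}f)(Y_t)$ is a martingale whose stochastic differential is $\sum_{i=1}^d (X_iP_{T-t}f)(Y_t)\,dB^i_t$. The key point is to identify, for fixed $T$, a companion martingale
\[
Y_t \;=\; \sum_{i,j=1}^d \int_0^t A_{ij}(T-s,Y_s)\,(X_jP_{T-s}f)(Y_s)\,dB^i_s
\]
and to check that the pair $\bigl(X_t,Y_t\bigr)=\bigl((P_{T-t}f)(Y_t),\,Y_t\bigr)$ satisfies the differential-subordination hypothesis \eqref{banoseestiamte} of Theorem~\ref{non-symm} with the given constants $b,B$.

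First I would compute the quadratic (co)variations. Writing $\sigma_t=\bigl((X_1P_{T-t}f)(Y_t),\dots,(X_dP_{T-t}f)(Y_t)\bigr)$ as a (row) vector, we have $d[X,X]_t=|\sigma_t|^2\,dt$, $d[Y,Y]_t=|A(T-t,Y_t)\sigma_t^{\mathsf T}|^2\,dt$, and $d[X,Y]_t=\langle \sigma_t, A(T-t,Y_t)\sigma_t^{\mathsf T}\rangle\,dt = \sum_{i,j}A_{ij}\sigma_i\sigma_j\,dt$ because $B^i$ are independent Brownian motions. Expanding the bracket on the left of \eqref{banoseestiamte}, its $t$-derivative equals
\[
\Bigl(\tfrac{B-b}{2}\Bigr)^2|\sigma_t|^2 - |A\sigma_t^{\mathsf T}|^2 + (b+B)\sum_{i,j}A_{ij}\sigma_i\sigma_j - \Bigl(\tfrac{b+B}{2}\Bigr)^2|\sigma_t|^2 .
\]
Now I would invoke the hypothesis that the symmetric matrix $A(T-t,x)$ has spectrum in $[b,B]$: in the eigenbasis of $A$ this expression becomes $\sum_k \sigma_k^2\bigl[(\tfrac{B-b}{2})^2-\lambda_k^2+(b+B)\lambda_k-(\tfrac{b+B}{2})^2\bigr]=\sum_k\sigma_k^2(B-\lambda_k)(\lambda_k-b)\ge 0$, so \eqref{banoseestiamte} holds and is nondecreasing. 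Together with $Y_0=0$ so $|Y_0|\le|X_0|$, Theorem~\ref{non-symm} gives $\|Y\|_p\le C_{p,b,B}\|X\|_p$ for the martingales at time $T$; since conditional expectation is an $L^p$-contraction and, by the It\^o isometry computation already carried out in the excerpt, $\mathcal{S}_A^T f(x)=\mathbb{E}(Y_T\mid Y_T=x)$ (using that $e^{\int_0^T V}\equiv 1$), we get $\|\mathcal{S}_A^T f\|_p\le C_{p,b,B}\|f\|_p$ with a constant independent of $T$.

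Finally I would pass to the limit $T\to\infty$ exactly as in the proof of Theorem~\ref{main}: testing against $g\in C_0^\infty(\mathbb{M})$ and using $\int_\mathbb{M}(\mathcal{S}_A^Tf)g\,d\mu=\sum_{i,j}\int_0^T\int_\mathbb{M}A_{ij}(t)(X_iP_tg)(X_jP_tf)\,d\mu\,dt$, the uniform bound and dominated convergence (justified by Corollary~\ref{NazVol-manifolds}, which controls $\int_0^\infty\int_\mathbb{M}|\nabla P_tf|\,|\nabla P_tg|\,d\mu\,dt$) yield $|\int_\mathbb{M}(\mathcal{S}_Af)g\,d\mu|\le C_{p,b,B}\|f\|_p\|g\|_q$, hence \eqref{non-symm-T}. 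The main obstacle — really the only nontrivial point — is the pointwise linear-algebra verification that the spectral bounds $b\le A\le B$ translate into the quadratic-form inequality $\sum_k\sigma_k^2(B-\lambda_k)(\lambda_k-b)\ge 0$ needed for \eqref{banoseestiamte}; everything else is a direct transcription of the arguments already given for the symmetric case, with $\|A\|\le\max\{|b|,B\}$ absorbed into $C_{p,b,B}$.
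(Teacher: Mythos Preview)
Your proposal is correct and follows essentially the same route as the paper's own proof: reduce to the finite-horizon operator $\mathcal{S}_A^T$, pair the transformed martingale with $(P_{T-t}f)(Y_t)$, invoke Theorem~\ref{non-symm}, use the $L^p$-contraction of conditional expectation, and then let $T\to\infty$ as in the proof of Theorem~\ref{main}. The paper simply asserts that the two martingales satisfy the hypothesis of Theorem~\ref{non-symm} ``see \cite{BanOse}'', whereas you carry out the spectral computation $(B-\lambda_k)(\lambda_k-b)\ge 0$ explicitly; this is the only substantive addition, and it is correct.
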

\begin{proof} As above, this result will follow if we can prove the same result for the operator 
\[
\mathcal{S}^T_A f (x) =\mathbb{E}\left(\sum_{i,j=1}^d \int_0^T A_{ij}(T-t,Y_t) (X_jP_{T-t} f)(Y_t)dB^i_t  \mid Y_T=x \right).
\] 
Consider the two martingales 
$$
Y_t=\sum_{i,j=1}^d \int_0^T A_{ij}(T-t,Y_t) (X_jP_{T-t} f)(Y_t)dB^i_t, \quad 
X_t=\sum_{i,j=1}^d \int_0^T (X_jP_{T-t} f)(Y_t)dB^i_t.
$$
It is simple to verify (see \cite{BanOse}) that under our assumptions on the matrix $A$, these martingales satisfy the hypothesis of Theorem \ref{non-symm}.  From this and the contraction of the conditional expectation on $L^p$ we obtain \eqref{non-symm-T} for the operators $\mathcal{S}^T_A$. 
\end{proof}

\subsection{An estimate of the constant in \eqref{Schro}}

The above approach based on Theorem \ref{BDG} to prove \eqref{Schro} is general but does not lead to explicit constants with useful information.  We present below an alternative argument which provides explicit constants with some additional information. We use the notations introduced in the previous section.

\begin{proposition} Suppose that $V\not=0$ and that 
$1<p<\infty$.  Then 
\[
\left\| \sum_{i=1}^d  \int_0^T (X_iP_{T-t}f)(Y_t)dB^i_t  \right\|_p  \le \frac{2^{2-\frac{1}{p} }p^2}{p-1} \| f \|_p
\]
\end{proposition}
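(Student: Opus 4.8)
The plan is to estimate the $L^p$ norm of the stochastic integral
$N_T := \sum_{i=1}^d \int_0^T (X_i P_{T-t}f)(Y_t)\,dB^i_t$
by relating it to the martingale $Z_t := e^{\int_0^t V(Y_s)ds}(P_{T-t}f)(Y_t)$ from Lemma \ref{martingale}, whose increments are $dZ_t = \sum_i e^{\int_0^t V(Y_s)ds}(X_i P_{T-t}f)(Y_t)\,dB^i_t$. Writing $W_t := e^{\int_0^t V(Y_s)ds}$, which is a nonincreasing process bounded by $1$ (since $V\le 0$), we have formally $dN_t = W_t^{-1}\,dZ_t$, so $N_t = \int_0^t W_s^{-1}\,dZ_s$. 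The quadratic variation satisfies $\langle N\rangle_T = \int_0^T W_s^{-2}\,d\langle Z\rangle_s$, and since $W_s \ge W_t$ for $s\le t$, one gets $\langle N\rangle_T \le W_T^{-2}\langle Z\rangle_T$; equivalently $\langle N\rangle_T^{1/2} \le W_T^{-1}\langle Z\rangle_T^{1/2}$. This is precisely the structure of Theorem \ref{BDG} with $M_t = N_t$ and $V_t = V(Y_t)$: indeed $Z_t = W_t\int_0^t W_s^{-1}dN_s$, so Theorem \ref{BDG} gives $\mathbb{E}(\sup_t |Z_t|^p) \le C_p\,\mathbb{E}(\langle N\rangle_T^{p/2})$.

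First I would run the following chain of estimates. By the Burkholder--Davis--Gundy inequality (standard, not the weighted version), $\|N_T\|_p \le c_p\,\|\langle N\rangle_T^{1/2}\|_p$ for an explicit $c_p$; the sharp/usable constant here for the upper BDG bound with $p\ge 1$ can be taken of order $p$ (or one can be slightly more careful for $1<p<2$). Then bound $\langle N\rangle_T^{1/2} \le W_T^{-1}\langle Z\rangle_T^{1/2}$ as above, and use that $\langle Z\rangle_T = \langle M\rangle_T$ where $M$ is the martingale part — wait, more directly: apply the argument inside the proof of Theorem \ref{BDG} to control $\|\langle N\rangle_T^{1/2}\|_p$. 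Actually the cleanest route is: use the BDG lower bound $\mathbb{E}(\langle N\rangle_T^{p/2}) \le C_p'\,\mathbb{E}(\sup_t|N_t|^p)$ is the wrong direction; instead go through $Z$. Since $|Z_t| = W_t|P_{T-t}f(Y_t)| \le |P_{T-t}f(Y_t)|$ and $(P_{T-t}f(Y_t))$ composed appropriately, and since $e^{\int_0^T V(Y_s)ds}f(Y_T) = Z_T$, Doob's inequality gives $\mathbb{E}(\sup_t|Z_t|^p) \le (p/(p-1))^p\,\mathbb{E}(|Z_T|^p) = (p/(p-1))^p\,\mathbb{E}(W_T^p|f(Y_T)|^p) \le (p/(p-1))^p\|f\|_p^p$ (using $W_T\le 1$ and that $Y_T$ has law $\mu$). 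Combining with Theorem \ref{BDG} applied to $M=N$ — which yields $\mathbb{E}(\langle N\rangle_T^{p/2})$ controlled... hmm, Theorem \ref{BDG} goes from $\langle M\rangle_T$ to $\sup|Z_t|$, i.e. the wrong way. So instead I would directly prove $\|\langle N\rangle_T^{1/2}\|_p \lesssim \|f\|_p$ by the Lenglart-type argument: I would redo the computation in Theorem \ref{BDG}'s proof but tracking $\langle N\rangle_T$ in terms of $\sup|Z_t|$ and $\|f\|_p$, using $d\langle N\rangle_t = W_t^{-2}d\langle Z\rangle_t \le W_t^{-2}\cdot(\text{increments from }d|Z_t|^2)$, and the identity $d|Z_t|^2 = 2Z_t V(Y_t)|Z_t|\,\mathrm{(no)}$ — rather $d|Z_t|^2 = 2Z_t\,dZ_t + d\langle Z\rangle_t$ with the drift piece $2Z_tZ_tV_t\,dt \le 0$, so $\mathbb{E}\langle Z\rangle_T \le \mathbb{E}|Z_T|^2 \le \|f\|_2^2$ pointwise-in-the-stopping-sense, and then bootstrap to $L^{p/2}$ via Lenglart/Doob exactly as in the $2<p<\infty$ part of the proof of Lemma \ref{mainT}. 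The final constant $\frac{2^{2-1/p}p^2}{p-1}$ should emerge from multiplying the BDG constant ($\sim$ order $p$), the Doob constant ($p/(p-1)$), and the Lenglart factor (source of the $2^{2-1/p}$), so I would choose the constants in each step to land exactly there.

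The main obstacle I anticipate is bookkeeping the constants so that the product comes out to exactly $\frac{2^{2-1/p}p^2}{p-1}$ rather than just "some explicit constant" — in particular choosing the right form of the BDG inequality (the version with constant $\le \sqrt{2}\,p/\sqrt{p-1}$ or similar) and the right interpolation exponent $k$ in the Lenglart step. A secondary technical point is the passage from the weighted stochastic integral identity $N_t = \int_0^t W_s^{-1}dZ_s$ to the quadratic-variation domination $\langle N\rangle_T \le W_T^{-2}\langle Z\rangle_T$, which requires care since $W$ is random but is genuinely nonincreasing and adapted, so Stieltjes comparison applies pathwise; this should be routine. Finally, as in Theorem \ref{BDG}, one first stops to make $M$ (here $N$) bounded, proves the bound with a $T$-independent constant, and removes the stopping by monotone convergence.

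\medskip

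\noindent\textbf{Remark on an alternative.} One could also avoid Theorem \ref{BDG} entirely and argue from Lemma \ref{mainT}'s internal estimate
$\mathbb{E}\big(\big(\int_0^T\sum_i (X_iP_{T-t}f)^2(Y_t)\,dt\big)^{p/2}\big) \le p^{p/2}\big(\tfrac{p}{p-2}\big)^{p/2}\|f\|_p^p$
for $p>2$ combined with BDG; but this blows up as $p\downarrow 2$, whereas the claimed bound $\frac{2^{2-1/p}p^2}{p-1}$ is finite and smooth for all $p>1$, so the $Z$-based route above (valid in the full range $1<p<\infty$, then extended by duality if needed) is the one I would pursue.
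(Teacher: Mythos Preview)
Your approach has a fatal gap. The quadratic-variation comparison $\langle N\rangle_T \le W_T^{-2}\langle Z\rangle_T$ is correct pathwise, but $W_T^{-1}=e^{-\int_0^T V(Y_s)ds}$ is \emph{unbounded}: since $V\le 0$, this factor can be exponentially large in $T$ and in $\|V\|_\infty$, and there is no moment control on it. So every route you sketch that passes from $Z$ back to $N$ through $W_T^{-1}$ (or $W_t^{-2}$ inside the integral) cannot produce a $T$-independent constant. You note correctly that Theorem~\ref{BDG} goes the wrong way, and your attempted redo of its proof ``tracking $\langle N\rangle_T$ in terms of $\sup|Z_t|$'' runs into exactly the same obstruction: the relation $d\langle N\rangle_t = W_t^{-2}\,d\langle Z\rangle_t$ has the bad weight on the wrong side.

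The paper's argument avoids the Feynman--Kac weight altogether. Apply It\^o's formula to the \emph{unweighted} process $(P_{T-t}f)(Y_t)$: this is not a martingale, but for $f\ge 0$ it is a nonnegative submartingale, since its drift is $-V(Y_t)(P_{T-t}f)(Y_t)\ge 0$. The stochastic integral $N_T$ is then the submartingale at time $T$ (namely $f(Y_T)$) minus the predictable increasing part $P_Tf(Y_0)-\int_0^T V(Y_s)(P_{T-s}f)(Y_s)\,ds$. The Lenglart--L\'epingle--Pratelli inequality bounds the $L^p$ norm of that compensator by $p\,\|\sup_t (P_{T-t}f)(Y_t)\|_p$, and Doob gives $\|\sup_t (P_{T-t}f)(Y_t)\|_p\le \frac{p}{p-1}\|f\|_p$; combined with the triangle inequality this yields $\|N_T\|_p\le \frac{2p^2}{p-1}\|f\|_p$ for $f\ge 0$. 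For general $f$ one splits $f=f^+-f^-$ and uses $|a+b|^p\le 2^{p-1}(|a|^p+|b|^p)$ together with $\|f^+\|_p^p+\|f^-\|_p^p=\|f\|_p^p$; this splitting is the source of the factor $2^{1-1/p}$, so that the final constant is $2^{1-1/p}\cdot\frac{2p^2}{p-1}=\frac{2^{2-1/p}p^2}{p-1}$. No BDG is used, and no Lenglart factor of the form $\frac{2-k}{1-k}$ appears; your guess about where $2^{2-1/p}$ comes from is off.
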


\begin{proof}
From It\^o's formula applied to $(P_{T-t} f) (Y_t)$, we find
\[
(P_{T-t} f) (Y_t) =P_Tf (Y_0) +\int_0^t \left( \frac{\partial}{\partial s} +\frac{1}{2} \sum_{i=1}^d X_i^2 +X_0 \right)(P_{T-s} f) (Y_s) dt +\sum_{i=1}^d  \int_0^T (X_iP_{T-t}f)(Y_t)dB^i_t .
\]
But,
\[
 \left( \frac{\partial}{\partial s} + \frac{1}{2} \sum_{i=1}^d X_i^2 +X_0 \right)(P_{T-s} f)(Y_s) =-V(Y_s)(P_{T-s} f)(Y_s)
 \]
and  therefore
 \[
(P_{T-t} f) (Y_t) =P_Tf (Y_0) -\int_0^t V(Y_s)(P_{T-s} f)(Y_s) dt + \sum_{i=1}^d \int_0^t X_i (P_{T-s}f)(Y_s)dB^i_s .
\]
Assume now $f \ge 0$. In that case, the above computation shows that $(P_{T-t} f) (X_t)$ is a non negative submartingale.  Thus from   Lenglart-L\'epingle-Pratelli  (see Theorem 3.2 in \cite{LenLepPra}) and Doob's maximal inequality, we have 
\begin{align*}
\left\| P_Tf (Y_0) -\int_0^T V(Y_s)(P_{T-s} f)(Y_s) dt \right\|_p \le p  \left\| \sup_{0 \le t \le T} (P_{T-t} f) (Y_t)  
\right\|_p \le \frac{p^2}{p-1} \| f \|_p.
\end{align*}
We conclude
\[
\left\| \sum_{i=1}^d  \int_0^T (X_iP_{T-t}f)(Y_t)dB^i_t  \right\|_p \le \frac{2p^2}{p-1} \| f \|_p.
\]
For a general $f$, we can write
\[
f=f^+-f^-
\]
and we see that
\begin{eqnarray*}
\left\| \sum_{i=1}^d \int_0^t X_i (P_{T-s}f)(Y_s)dB^i_s \right\|^p_p  & = &\left\|\sum_{i=1}^d \int_0^t X_i (P_{T-s}f^+)(Y_s)dB^i_s - 
\sum_{i=1}^d \int_0^t X_i (P_{T-s}f^-)(Y_s)dB^i_s \right\|_p^p \\
 & \le & 2^{p-1} \left\| \sum_{i=1}^d \int_0^t X_i (P_{T-s}f^+)(Y_s)dB^i_s \right\|_p^p \\
 &+& 2^{p-1} \left\|\sum_{i=1}^d \int_0^t X_i (P_{T-s}f^-)(Y_s)dB^i_s  \right\|^p_p \\
 & \le& 2^{p-1}  \left(  \frac{2p^2}{p-1} \right)^p   ( \| f \|^p_p + \|f^-\|^p_p) \\
 &\le&  2^{p-1}  \left(  \frac{2p^2}{p-1} \right)^p   \| f  \|^p_p.
\end{eqnarray*}
\end{proof}

From this we obtained the following explicit bound in \eqref{Schro}.

\begin{corollary}
Let $1<p<\infty$.  For any $V\geq 0$, $f \in L_\mu^p (\mathbb{M})$,
\[
\| \mathcal{S}_A f \|_p \le 8  \| A \| (p^*-1)  \frac{p^4}{(p-1)^2}  \| f \|_p
\]
\end{corollary}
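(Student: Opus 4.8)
The plan is to deduce this corollary directly from the preceding Proposition by combining it with the identity \eqref{S_A^T}, the contraction property of conditional expectation, and the $T\to\infty$ limiting argument already used in the proof of Theorem \ref{main}. First I would recall that, via the It\^o isometry computation carried out just before \eqref{S_A^T}, for $f,g\in C_0^\infty(\mathbb{M})$ one has
\[
\int_\bM (\mathcal{S}^T_A f)\, g\, d\mu=\sum_{i,j=1}^d\mathbb{E}\left(\int_0^T A_{ij}(T-t,Y_t)(X_iP_{T-t}g)(Y_t)(X_jP_{T-t}f)(Y_t)\,dt\right),
\]
so that, using the Cauchy--Schwarz inequality in the $(i,j)$ indices pointwise, the right side is bounded by
\[
\|A\|\,\mathbb{E}\left(\left(\int_0^T\sum_{i=1}^d(X_iP_{T-t}f)^2(Y_t)\,dt\right)^{1/2}\left(\int_0^T\sum_{i=1}^d(X_iP_{T-t}g)^2(Y_t)\,dt\right)^{1/2}\right).
\]

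Next I would apply H\"older's inequality in $\mathbb{E}$ with exponents $p$ and $q$ to split this into a product of $L^p$ and $L^q$ norms of the two quadratic-variation terms, and recognize each quadratic variation as $\langle N\rangle_T$ for the martingale $N_t=\sum_{i=1}^d\int_0^t(X_iP_{T-t}f)(Y_t)dB^i_t$ of the Proposition. By the Burkholder-Davis-Gundy inequality (or more simply since the Proposition's estimate $\|N\|_p\le \frac{2^{2-1/p}p^2}{p-1}\|f\|_p$ controls $\mathbb{E}(\langle N\rangle_T^{p/2})$ up to the BDG constant), I get $\mathbb{E}(\langle N\rangle_T^{p/2})^{1/p}\lesssim \frac{p^2}{p-1}\|f\|_p$, and likewise for $g$ with exponent $q$. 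Assembling, $\left|\int_\bM(\mathcal{S}^T_Af)g\,d\mu\right|\le \|A\|\cdot C\,\frac{p^2}{p-1}\cdot\frac{q^2}{q-1}\|f\|_p\|g\|_q$; then letting $T\to\infty$ exactly as in the proof of Theorem \ref{main} and taking the supremum over $\|g\|_q\le 1$ yields the bound on $\|\mathcal{S}_Af\|_p$.

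The main obstacle is bookkeeping of the constant: I must track precisely how the BDG step converts the $L^p$ bound on the stochastic integral into the bound on $\mathbb{E}(\langle N\rangle_T^{p/2})$, and how the two factors $\frac{p^2}{p-1}$ and $\frac{q^2}{q-1}$ combine into the stated $8(p^*-1)\frac{p^4}{(p-1)^2}$. Here one uses that $\frac{q^2}{q-1}=\frac{q^2}{q-1}$ with $q=p/(p-1)$ rearranges to $\frac{p^2}{p-1}\cdot\frac{1}{(p-1)}$-type expressions, and that $p^*-1$ is $\max\{p-1,1/(p-1)\}$; the power $p^4/(p-1)^2$ and the factor $8$ emerge from multiplying the two one-sided estimates together while absorbing the $2^{2-1/p}$ and BDG numerical factors. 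A cleaner route that avoids BDG constants entirely is to apply the Proposition's estimate to $f$ with exponent $p$ and to $g$ with exponent $q$ after writing $\mathcal{S}^T_Af$ via the martingale $N$ and $g(Y_T)$ directly, then invoke $\|N^f\|_p\le\frac{2^{2-1/p}p^2}{p-1}\|f\|_p$, the analogous bound for $g$, and Cauchy--Schwarz/H\"older; this keeps all constants explicit. Either way the remaining work is a routine, if slightly tedious, arithmetic simplification to reach the displayed constant $8\|A\|(p^*-1)\frac{p^4}{(p-1)^2}$.
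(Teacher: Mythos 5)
Your overall skeleton (duality, the It\^o-isometry identity for $\int_{\mathbb{M}}(\mathcal{S}_A^Tf)\,g\,d\mu$, H\"older with exponents $p$ and $q$, the Proposition's bound on $\left\|\sum_{i}\int_0^T(X_iP_{T-t}f)(Y_t)dB^i_t\right\|_p$, and the $T\to\infty$ limit) matches the paper's. But there is a concrete gap in how you handle the matrix $A$, and it is precisely the step that produces the factor $(p^*-1)$. In the paper the bilinear form is written as $\mathbb{E}\left(N^{A,f}_T\,N^g_T\right)$, where $N^{A,f}_t=\sum_{i,j}\int_0^t A_{ij}(T-s,Y_s)(X_jP_{T-s}f)(Y_s)dB^i_s$ and $N^g$ is the untransformed integral for $g$; after H\"older one invokes Theorem \ref{symm}: since $N^{A,f}$ is differentially subordinate to $\|A\|\,N^f$, one gets $\|N^{A,f}_T\|_p\le\|A\|(p^*-1)\|N^f_T\|_p$, and only then is the Proposition applied to $N^f$ in $L^p$ and to $N^g$ in $L^q$. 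The arithmetic $2^{2-1/p}\cdot 2^{2-1/q}=2^3=8$ and $\frac{p^2}{p-1}\cdot\frac{q^2}{q-1}=\frac{p^4}{(p-1)^2}$ exactly exhausts the product of the two Proposition bounds, so the extra $(p^*-1)$ cannot ``emerge from multiplying the two one-sided estimates together'' as you suggest; it must be supplied by Burkholder's differential-subordination inequality, which your write-up never invokes.

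Your first route (pointwise Cauchy--Schwarz in $(i,j)$, then H\"older, then BDG to pass from $\|N\|_p$ back to $\mathbb{E}\left(\langle N\rangle_T^{p/2}\right)$) does prove boundedness, but it cannot yield the stated constant: the lower BDG inequality together with Doob introduces $p$-dependent numerical factors that are not $1$ and are absent from $8\|A\|(p^*-1)\frac{p^4}{(p-1)^2}$, and this route produces no $(p^*-1)$ at all. Your ``cleaner route'' is essentially the paper's proof, but as written the coefficients $A_{ij}$ are never actually removed from the stochastic integral at the cost $\|A\|(p^*-1)$. Insert that single application of Theorem \ref{symm} and the rest of your argument goes through, with the arithmetic closing exactly as you computed.
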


\begin{proof}  We first note that the martingale 
$$
\sum_{i,j=1}^d \int_0^T   A_{ij}(T-t)(X_j P_{T-t} f)(Y_t)dB^i_t 
$$
is differentially subordinate to 
$$
\|A\|\sum_{i=1}^d \int_0^T (X_i P_{T-t} f)(Y_t)dB^i_t. 
$$
From Theorem \ref{symm} and the above proposition  we obtain 
\begin{align*}
\int_\bM (\mathcal{S}^T_A f)g d\mu& =\sum_{i,j=1}^d \int_0^{T} \int_{\mathbb{M}}  A_{ij}(t)(X_iP_{t} g) (X_jP_{t} f) d\mu dt \\
 &= \mathbb{E} \left(  \sum_{i,j=1}^d \int_0^T   A_{ij}(T-t)(X_j P_{T-t} f)(Y_t)dB^i_t  \sum_{i=1}^d \int_0^T  (X_i P_{T-t} g)(Y_t)dB^i_t  \right) \\
 &\le \left\| \sum_{i,j=1}^d \int_0^T   A_{ij}(T-t)(X_j P_{T-t} f)(Y_t)dB^i_t  \right\|_p  \left\| \sum_{i=1}^d \int_0^T  (X_i P_{T-t} g)(Y_t)dB^i_t  \right \|_q \\
 & \le \| A \| (p^*-1)  \left\|  \sum_{i=1}^d \int_0^T  (X_i P_{T-t} f)(Y_t)dB^i_t \right\|_p  \left\|   \sum_{i=1}^d \int_0^T  (X_i P_{T-t} g)(Y_t)dB^i_t \right \|_q \\
 & \le  \| A \| (p^*-1)   \frac{2^{2-\frac{1}{p} }p^2}{p-1}  \frac{2^{2-\frac{1}{q} }q^2}{q-1} \| f \|_p \| g \|_q \\
 & \le 8  \| A \| (p^*-1)  \frac{p^4}{(p-1)^2}  \| f \|_p \| g \|_q,
\end{align*}
which implies  the corollary. 
\end{proof}

\section{Applications}

\subsection{Multipliers of Laplace transform-type for Schr\"odinger operators on manifolds}

We work in the same setting as the previous Section. We consider the following \textit{multiplier} for the Schr\"odinger operator $L=-\frac{1}{2}\sum_{i=1}^d X_i^* X_i +V$. First,  let $a\in L^{\infty}[0, \infty)$. Set 
\[
T_a f =\int_0^{+\infty}  a(t) L P_{2t} f dt.
\]

We can observe that since $L$ is essentially self-adjoint, from spectral theorem, there is a measure space $(\Omega, \nu)$, a unitary map $U: L^2_\nu (\Omega) \rightarrow L^2_\mu(\mathbb{M})$ and a non negative measurable function on $\Omega$ such that
\[
U^{-1}LU f (x) =-\lambda(x) f (x), \quad x \in \Omega.
\]
The operator $T_a$ acts on these  as multiplication operator  on $L^2_\nu (\Omega)$ in the sense that
\[
U^{-1}T_aU f (x) =-\lambda(x) \int_0^{+\infty}  a(t) e^{-t \lambda(x)} dt f(x), \quad x \in \Omega.
\]

Let us also observe that in several settings the operators $T_a$ can be interpreted as multipliers in the Fourier analysis sense. For instance, let $G$ be a compact Lie group.  Let $U$ be a bounded operator on $ L^2(G)$ which commutes with left and right translations. Then there exists a bounded function $\Phi(m)$ on $\hat{G}$, the space of equivalence classes of irreducible unitary representations of $G$, such that
\[
 Uf =\sum_{m \in \hat{G}} \Phi (m)d_m \chi_m * f, 
 \]
  where $\chi_m$  is the character and $d_m$ the dimension of the representation. Conversely, for any bounded $\Phi$, the above operator defines  a bounded operator on $ L^2(G)$ which commutes with left and right translations.  In this framework, if $L$ is an essentially self-adjoint diffusion operator that commutes with left and right translations  (like the Laplace-Beltrami operator for a bi-invariant metric), then we have
  \[
  T_af=\sum_{m \in \hat{G}} -\lambda (m)  \int_0^{+\infty}  a(t) e^{-t \lambda(m)} dt d_m \chi_m * f
  \]
where $(-\lambda(m))_{m \in \hat{G}}$ is the spectrum of $L$.

\
%
 

With the notations of the previous section, we see that
\[
T_a f =-\frac{1}{2}S_Af +\int_0^{+\infty} a(t) P_t V P_t f dt
\]
where $A(t)=a(t) \mathbf{Id}$. In the following we denote by $Q_t$ the Markovian semigroup with generator $-\frac{1}{2}\sum_{i=1}^d X_i^* X_i $.

\begin{proposition} Fix $1<p<\infty$. 
\begin{itemize}
\item[(i)] For any nonnegative potential  $V$ that satisfies $V \le -m$ for some $m \ge 0$, we have 
\begin{equation}
\| T_a f \|_p \le \left( 4  \| a \|_\infty (p^*-1)  \frac{p^4}{(p-1)^2}+\int_0^{+\infty} |a(t)| e^{-2mt} \| Q_t |V |^q \|_\infty^{1/q}  dt  \right) \| f \|_p,
\end{equation}
with $\frac{1}{q}+\frac{1}{p}=1$.
\item[(ii)] Suppose  $V\equiv 0$.  Let $a$ be such that for all $t\in [0, \infty)$, $-\infty<b\leq a(t)\leq B<\infty$.  Then 
\begin{equation}
\| S_a f \|_p \leq \frac{1}{2} C_{p, b, B}\|f\|_p,
\end{equation}
where the $C_{p, b, B}$ is the constant of Theorem \ref{non-symm}. 
\end{itemize}
\end{proposition}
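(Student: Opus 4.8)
The plan is to treat (i) and (ii) separately, in both cases starting from the decomposition recorded just above the proposition,
\[
T_a f = -\tfrac12\,\mathcal{S}_A f + \int_0^\infty a(t)\,P_t V P_t f\,dt,\qquad A(t,x)=a(t)\mathbf{Id},
\]
and from the structural properties of the auxiliary semigroup $Q_t$ with generator $-\tfrac12\sum_i X_i^*X_i$: it is symmetric on $L^2_\mu$, positivity preserving, sub-Markovian, and a contraction on every $L^p_\mu$; moreover, by the Feynman--Kac formula and $V\le -m\le 0$ one has the pointwise domination $|P_t h|\le e^{-mt}Q_t|h|$.

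For (i), the first summand is immediate: since $\|A\|=\|a\|_\infty$ when $A=a\mathbf{Id}$, the explicit bound for \eqref{Schro} established at the end of Section~3 gives $\|\tfrac12\mathcal S_A f\|_p\le 4\|a\|_\infty(p^*-1)\tfrac{p^4}{(p-1)^2}\|f\|_p$, which is the first term on the right in (i). The real content is the second summand, for which the key estimate is
\[
\|P_t V P_t f\|_p \le e^{-2mt}\,\|Q_t|V|^q\|_\infty^{1/q}\,\|f\|_p .
\]
I would prove this by duality: for $g\in L^q_\mu$, self-adjointness of $P_t$ gives $\int (P_tVP_tf)\,g\,d\mu=\int V\,(P_tf)(P_tg)\,d\mu$, and Feynman--Kac domination bounds its absolute value by $e^{-2mt}\int |V|\,(Q_t|f|)(Q_t|g|)\,d\mu$. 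Then I apply Hölder with the pair $(p,q)$, grouping the factor $|V|$ with the $L^q$ term $Q_t|g|$, and use the $L^p$-contractivity of $Q_t$ on the other factor; this reduces matters to $\||V|\,Q_t|g|\|_q\le \|Q_t|V|^q\|_\infty^{1/q}\|g\|_q$, which follows from Jensen's inequality $(Q_t|g|)^q\le Q_t(|g|^q)$ (valid since $Q_t$ is sub-Markovian), then moving $Q_t$ onto $|V|^q$ by self-adjointness, then the trivial $L^\infty$--$L^1$ estimate. Integrating in $t$ and adding the two contributions yields (i); the passage from $f\in\mathcal C_0^\infty(\mathbb M)$ to general $f\in L^p_\mu$ is a routine density argument.

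For (ii), with $V\equiv 0$ the operator in question is $\mathcal S_a=-T_a=\tfrac12\mathcal S_A$ for the scalar matrix $A(t)=a(t)\mathbf{Id}$; equivalently it is $\mathcal S_{\widetilde A}$ with $\widetilde A(t)=\tfrac12 a(t)\mathbf{Id}$. This $\widetilde A$ is symmetric and its quadratic form satisfies $\tfrac b2|\xi|^2\le \sum_{i,j}\widetilde A_{ij}(t,x)\xi_i\xi_j\le \tfrac B2|\xi|^2$ for all $\xi\in\mathbb R^d$, so Theorem~\ref{non-symm-S_A} applies and gives $\|\mathcal S_a f\|_p\le C_{p,b/2,B/2}\|f\|_p$. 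Finally $C_{p,b/2,B/2}=\tfrac12 C_{p,b,B}$ by the homogeneity $C_{p,\lambda b,\lambda B}=\lambda C_{p,b,B}$ for $\lambda>0$, which is immediate from Definition~\ref{defC} (if $v$ takes values in $[\lambda b,\lambda B]$ then $v/\lambda$ takes values in $[b,B]$, and conversely).

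The only genuinely delicate step is the estimate for $\|P_tVP_tf\|_p$: one must route the factor $|V|$ to the dual ($L^q$) side so the exponent on $\|Q_t|V|^q\|_\infty$ comes out as $1/q$ rather than $1/p$, and one must make sure the properties of $Q_t$ being used (self-adjointness on $L^2_\mu$, positivity, $L^p$-contractivity, sub-Markovianity for Jensen) hold in the stated generality. Everything else — the explicit bound for the $\mathcal S_A$ part, the interchange of $\int_0^\infty dt$ with the $L^p$-norm, the density reduction, and the scaling of $C_{p,b,B}$ — is bookkeeping. (One may note in passing that the symmetric Hölder split, pairing $|V|^{1/p}$ with $Q_t|f|$ and $|V|^{1/q}$ with $Q_t|g|$, actually yields the slightly sharper bound $e^{-2mt}\|Q_t|V|\|_\infty$, but the stated form is all that is needed.)
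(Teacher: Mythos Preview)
Your proof is correct. The overall architecture matches the paper's: split $T_af=-\tfrac12\mathcal S_Af+\int_0^\infty a(t)P_tVP_tf\,dt$, handle the $\mathcal S_A$ piece by the explicit bound at the end of Section~3, and for~(ii) invoke Theorem~\ref{non-symm-S_A}. The one genuine difference is how you bound $\|P_tVP_tf\|_p$. The paper works pointwise: it writes $P_tVP_tf(x)=\mathbb E_x\bigl(e^{\int_0^tV(X_s)ds}V(X_t)\,P_tf(X_t)\bigr)$ by Feynman--Kac, applies H\"older \emph{inside} the expectation to separate the $V$-factor from $P_tf$, and then uses Jensen $(Q_t|f|)^p\le Q_t(|f|^p)$ together with the semigroup property to arrive at the pointwise bound $|P_tVP_tf(x)|\le e^{-2mt}\|Q_t|V|^q\|_\infty^{1/q}(Q_{2t}|f|^p(x))^{1/p}$. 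You instead argue by duality, pairing with $g\in L^q$, moving one $P_t$ onto $g$ by self-adjointness, and grouping $|V|$ with the $L^q$ factor. Both routes yield the same $L^p$ bound $e^{-2mt}\|Q_t|V|^q\|_\infty^{1/q}\|f\|_p$; your argument is a bit cleaner in that it only uses the domination $|P_th|\le e^{-mt}Q_t|h|$ rather than the full Feynman--Kac representation of $P_tVP_tf$, while the paper's version has the minor advantage of producing a pointwise estimate before integrating. For~(ii) your scaling detour $C_{p,b/2,B/2}=\tfrac12 C_{p,b,B}$ is correct but unnecessary: applying Theorem~\ref{non-symm-S_A} directly to $A=a\,\mathbf{Id}$ (which satisfies $b|\xi|^2\le \langle A\xi,\xi\rangle\le B|\xi|^2$) gives $\|\mathcal S_Af\|_p\le C_{p,b,B}\|f\|_p$, and then $\|T_af\|_p=\tfrac12\|\mathcal S_Af\|_p$.
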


\begin{proof}
Since 
\[
T_a f =-\frac{1}{2}S_Af +\int_0^{+\infty} a(t) P_t V P_t f dt,
\]
using the results of the previous section, we only need to bound in $L^p$ the operator  $\int_0^{+\infty} a(t) P_t V P_t f dt$. From Feynman-Kac formula, we have
\[
P_t V P_t f  (x) =\mathbb{E} \left( e^{\int_0^t V(X_s^x) ds} V(X_t^x) P_t f (X_t^x) \right),
\]
where $(X_t^x)_{t \ge 0}$ is the diffusion with generator  $-\frac{1}{2}\sum_{i=1}^d X_i^* X_i $ started at $x \in \mathbb{M}$. Thus
\begin{align*}
 |P_t V P_t f  (x) |  & \le \mathbb{E} \left( e^{q\int_0^t V(X_s^x) ds} |V(X_t^x)|^q \right)^{1/q}  \mathbb{E} \left( |P_t f (X_t^x)|^p  \right)^{1/p} \\
  &\le e^{-2mt} \| Q_t |V |^q \|_\infty^{1/q} \mathbb{E} \left( Q_t | f| (X_t^x)^p  \right)^{1/p} \\
  & \le e^{-2mt} \| Q_t |V |^q \|_\infty^{1/q} \left( Q_{2t} | f|^p \right) (x)^{1/p} 
\end{align*}
As a consequence,
\[
\|P_t V P_t f \|_p  \le e^{-2mt} \| Q_t |V |^q \|_\infty^{1/q} \| f \|_p,
\]
which yields the expected result.
\end{proof}

For instance, we immediately deduce from the previous proposition:
\begin{corollary}
Fix $1<p<\infty$. 
\begin{itemize}
\item[(i)] For any non-negative potential  $V$ that satisfying $ -M \le V \le -m$, for some $m, M \ge 0$, we have 
\begin{equation*}
\| T_a f \|_p \le \left( 4  \| a \|_\infty (p^*-1)  \frac{p^4}{(p-1)^2}+M \int_0^{+\infty} |a(t)| e^{-2mt}  dt  \right) \| f \|_p.
\end{equation*}
\item[(ii)] Assume that the semigroup $Q_t$ is ultracontractive.  For any nonnegative potential  $V$ that satisfies $ V \in L_\mu^q(\mathbb{M})$, we have 
\begin{equation*}
\| T_a f \|_p \le \left( 4  \| a \|_\infty (p^*-1)  \frac{p^4}{(p-1)^2}+\| V \|_q  \int_0^{+\infty} |a(t)| \| Q_t \|_{\infty ,1}  dt  \right) \| f \|_p,
\end{equation*}
with $\frac{1}{q}+\frac{1}{p}=1$.
\end{itemize}
\end{corollary}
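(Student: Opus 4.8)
The plan is to deduce the Corollary directly from the Proposition preceding it, specifising the two estimates on the potential term $\int_0^{+\infty} a(t)\, P_t V P_t f\, dt$ that appear there. The Proposition gives, for nonnegative $V$ with $V \le -m$, the bound
\[
\| T_a f \|_p \le \left( 4 \| a \|_\infty (p^*-1) \frac{p^4}{(p-1)^2} + \int_0^{+\infty} |a(t)| e^{-2mt} \| Q_t |V|^q \|_\infty^{1/q}\, dt \right) \| f \|_p,
\]
so in both parts of the Corollary I only need to estimate the quantity $\| Q_t |V|^q \|_\infty^{1/q}$ and insert it under the integral sign.

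For part (i), the hypothesis $-M \le V \le -m$ means $|V| \le M$ pointwise, hence $|V|^q \le M^q$ pointwise, and since $Q_t$ is Markovian (it is the sub-Markovian, indeed Markovian, semigroup with generator $-\frac12 \sum X_i^* X_i$) it is a contraction on $L^\infty_\mu(\mathbb{M})$ and preserves order, so $\| Q_t |V|^q \|_\infty \le M^q$, i.e.\ $\| Q_t |V|^q \|_\infty^{1/q} \le M$. Substituting this constant bound into the integral of the Proposition immediately yields
\[
\| T_a f \|_p \le \left( 4 \| a \|_\infty (p^*-1) \frac{p^4}{(p-1)^2} + M \int_0^{+\infty} |a(t)| e^{-2mt}\, dt \right) \| f \|_p,
\]
which is exactly part (i). No further work is needed here.

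For part (ii) we no longer have an $L^\infty$ bound on $V$, only $V \in L^q_\mu(\mathbb{M})$, but we are compensated by ultracontractivity of $Q_t$, i.e.\ $Q_t$ maps $L^1_\mu(\mathbb{M})$ into $L^\infty_\mu(\mathbb{M})$ with finite norm $\| Q_t \|_{\infty,1}$. First apply $Q_t$ to $|V|^q \in L^1_\mu(\mathbb{M})$ (note $|V|^q \in L^1$ since $V \in L^q$) to get $\| Q_t |V|^q \|_\infty \le \| Q_t \|_{\infty,1} \| |V|^q \|_1 = \| Q_t \|_{\infty,1} \| V \|_q^q$, and hence $\| Q_t |V|^q \|_\infty^{1/q} \le \| Q_t \|_{\infty,1}^{1/q} \| V \|_q$. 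Since we are also using $m = 0$ here (the potential is only assumed in $L^q$, with $V \le 0$, so we take $m=0$ and $e^{-2mt} = 1$), substituting into the Proposition gives
\[
\| T_a f \|_p \le \left( 4 \| a \|_\infty (p^*-1) \frac{p^4}{(p-1)^2} + \| V \|_q \int_0^{+\infty} |a(t)| \, \| Q_t \|_{\infty,1}^{1/q}\, dt \right) \| f \|_p,
\]
which matches the stated form of part (ii) (the exponent $1/q$ on $\| Q_t \|_{\infty,1}$ being the natural one from the $q$-th power inside). There is essentially no obstacle here; the only point requiring a word of care is checking that $|V|^q$ is genuinely in $L^1_\mu$ and that ultracontractivity is being invoked with the correct pair of exponents, but both are immediate from the hypotheses. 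The entire proof is a two-line specialisation of the Proposition in each case.
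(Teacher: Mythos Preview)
Your approach is exactly what the paper intends: it offers no proof beyond the sentence ``we immediately deduce from the previous proposition,'' and your two specializations of $\|Q_t|V|^q\|_\infty^{1/q}$ are the obvious ones. Your observation about the exponent in part (ii) is well taken: the derivation naturally produces $\|Q_t\|_{\infty,1}^{1/q}$ rather than $\|Q_t\|_{\infty,1}$, and the paper's stated form appears to drop that exponent.
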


\begin{remark} Laplace transform-type operators  have been extensively studied in many settings.  For some of this literature, see \cite{Var}, \cite{Hyt1}, \cite{Hyt2}, \cite{Ban}, and references contained in those papers. 

\end{remark}

\subsection{Second order Riesz transforms on Lie groups of compact type}\label{LieRiesz}

Let $G$ be a Lie group of compact type with Lie algebra $\mathfrak{g}$. We endow $G$ with a bi-invariant Riemannian structure and consider an orthonormal basis $X_1,\cdots, X_d$ of $\mathfrak{g}$.  In this setting the Laplace-Beltrami operator can be written as 
\[
L=\frac{1}{2} \sum_{i=1}^d X_i^2.
\]
It is essentially self-adjoint on the space of smooth and compactly supported functions and the assumptions of the previous section are satisfied. It is remarkable here that $X_i^*=-X_i$ and the vector fields $X_i$ do commute with the semigroup $P_t=e^{tL}$. In this case, if the matrix $A$ only depends on time, we get therefore
\[
\mathcal{S}_A f=-\sum_{i,j}\int_{0}^{+\infty} A_{ij} (t)   (X_i   X_j P_{2t} f ) dt.
\]
In particular, for a constant $A$, we obtain
\[
\mathcal{S}_A f=\sum_{i,j} A_{ij}  \left(\sum_{i=1}^d X_i^2\right)^{-1} X_i   X_j  f .
\]
Defining the Riesz transforms on $G$ by 
$$
R_jf=  \left(-\sum_{i=1}^d X_i^2\right)^{-1/2} X_jf
$$
we see that 
$$
\mathcal{S}_A f=\sum_{i,j=1}^d A_{ij} R_iR_j f. 
$$
We have the following result which follows from Theorems \ref{main} and \ref{non-symm-S_A}.  These inequality are exactly as those proved in $\bR^d$ for the classical Riesz transforms. 
\begin{theorem}\label{Lie-Riesz} Fix $1<p<\infty$.
\begin{itemize}
\item[(i)] For any constant coefficient matrix $A$, 
$$\|\sum_{i,j=1}^d A_{ij} R_iR_j f\|_p\leq (p^*-1)\|A\| \|f\|_p.$$
\item[(ii)] Assume that $A=(A_{ij})_{i,j=1}^d$ is symmetric matrix  with real entries and eigenvalues $\lambda_1\leq \lambda_2\leq \ldots \leq \lambda_d$. Then 
\begin{equation}\label{generalriesz}
\|\sum_{i,j=1}^d A_{ij}R_iR_jf \|_p\leq C_{p,\lambda_1,\lambda_d}\|f\|_p
\end{equation}
and this inequality is sharp.  In particular, if  $J\subsetneq \{1,2,\,\ldots,d\}$, then 
\begin{equation}\label{squareriesz}
\|\sum_{j\in J} R_j^2f\|_p\leq C_{p,0,1}\|f\|_p=c_p\|f\|_p, 
\end{equation}
where $c_p$ is the Choi constant in \eqref{choi1} and this inequality is also sharp. 
\end{itemize}
\end{theorem}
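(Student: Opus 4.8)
The plan is to deduce both parts from the general results already established, after first identifying the operators $\sum_{i,j} A_{ij} R_i R_j$ with the operators $\mathcal{S}_A$ of \eqref{operators} in the special case at hand. For a Lie group of compact type with $L=\tfrac12\sum_i X_i^2$, we have $V\equiv 0$, $X_i^*=-X_i$, the $X_i$ commute with $P_t=e^{tL}$, and $P_tP_t=P_{2t}$. Hence, for a constant matrix $A$, the computation in \S\ref{LieRiesz} gives $\mathcal{S}_A f = \sum_{i,j} A_{ij}\,(-\sum_k X_k^2)^{-1} X_i X_j f = \sum_{i,j} A_{ij} R_i R_j f$, where I would justify the spectral-calculus manipulation $\int_0^\infty X_iX_j P_{2t}f\,dt = -\tfrac12(\sum_k X_k^2)^{-1}X_iX_jf$ by the spectral theorem for the essentially self-adjoint operator $L$ (the integral $\int_0^\infty e^{-t\lambda}\,dt = 1/\lambda$ converges on the complement of the kernel, and $R_iR_j$ annihilates the kernel). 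This identification is the only computational point; once it is in place, part (i) is immediate: apply \eqref{V=0} of Theorem \ref{main} with this constant matrix $A$ to get $\|\sum_{i,j}A_{ij}R_iR_jf\|_p \le (p^*-1)\|A\|\,\|f\|_p$, first for $f\in C_0^\infty(G)$ and then for all $f\in L^p_\mu(G)$ by density.

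For part (i) sharpness is not asserted in the statement, so nothing further is needed there; for part (ii), I would invoke Theorem \ref{non-symm-S_A}. The matrix $A=(A_{ij})$ is symmetric with eigenvalues $\lambda_1\le\cdots\le\lambda_d$, so $\lambda_1|\xi|^2 \le \sum_{i,j}A_{ij}\xi_i\xi_j \le \lambda_d|\xi|^2$ for all $\xi\in\bR^d$; applying Theorem \ref{non-symm-S_A} with $b=\lambda_1$, $B=\lambda_d$ yields $\|\sum_{i,j}A_{ij}R_iR_jf\|_p \le C_{p,\lambda_1,\lambda_d}\|f\|_p$, which is \eqref{generalriesz}. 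For \eqref{squareriesz}, take $A$ to be the diagonal projection onto the coordinates in $J$: its eigenvalues are $0$ (with multiplicity $d-|J|\ge 1$, since $J\subsetneq\{1,\dots,d\}$) and $1$, so $C_{p,\lambda_1,\lambda_d}=C_{p,0,1}$, and by Choi's Theorem \ref{Choi1} this equals the constant $c_p$ of \eqref{choi1}.

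The remaining issue is sharpness in \eqref{generalriesz} and \eqref{squareriesz}. Here I would argue by transference from the Euclidean case: the second-order Riesz transforms $R_jR_k$ on $\bR^d$ (for the ordinary Laplacian) have best constants governed by exactly the same quantities $C_{p,b,B}$ — this is the content of the results of \cite{BanOse} (see also \cite{GeiSmiSak}) cited right after Theorem \ref{main} — so it suffices to produce, near a point of $G$, functions that make the group-level operator behave asymptotically like its Euclidean model. Concretely, one localizes in a small coordinate chart around the identity where the bi-invariant metric is Euclidean to second order, rescales, and checks that $\sum_{i,j}A_{ij}R_iR_j$ on $G$ applied to such concentrating functions converges (in the appropriate normalized $L^p$ sense) to $\sum_{i,j}A_{ij}\partial_i\partial_j(-\Delta)^{-1}$ on $\bR^d$; lower bounds for the latter then transfer back. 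I expect this transference/localization argument to be the main obstacle: the algebraic identifications and the two applications of the earlier theorems are routine, but making the blow-up limit rigorous — controlling the nonlocal operator $(-\sum_k X_k^2)^{-1}$ under rescaling and confirming that the constants it produces cannot be beaten — requires the careful estimates that the Euclidean references supply, and I would lean on those rather than reprove them.
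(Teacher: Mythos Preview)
Your derivation of the upper bounds is exactly the paper's: identify $\sum_{i,j}A_{ij}R_iR_j$ with $\mathcal{S}_A$ for a constant matrix, then invoke \eqref{V=0} of Theorem~\ref{main} for part~(i) and Theorem~\ref{non-symm-S_A} with $b=\lambda_1$, $B=\lambda_d$ for part~(ii); the diagonal projection onto $J$ gives \eqref{squareriesz}. Nothing to add there.

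Where you diverge is on sharpness. You propose a localization/blow-up transference from a neighborhood of the identity in $G$ to $\bR^d$, and you flag controlling the nonlocal inverse $(-\sum_k X_k^2)^{-1}$ under rescaling as the main obstacle. The paper avoids this entirely: a Lie group \emph{of compact type} is by definition one admitting a bi-invariant Riemannian metric, and $\bR^d$ with its flat metric is such a group (connected Lie groups of compact type are products of a compact group and a vector group). Hence the theorem, read as a statement valid for every Lie group of compact type, already contains $G=\bR^d$ as a special case, where $R_iR_j$ are the classical second-order Riesz transforms and the constants $p^*-1$, $C_{p,\lambda_1,\lambda_d}$, $c_p$ are known to be sharp by \cite{GeiSmiSak} and \cite{BanOse}. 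That is all the paper claims and all its one-line justification does. Your transference argument, if carried out, would give the stronger conclusion that the constants are sharp on each fixed $G$; that is a genuinely harder statement and not what is asserted here, so you are manufacturing an obstacle the paper does not face.
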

The fact that the constants in this theorem are best possible follows from the fact that they are already best possible on $\bR^d$. These results simply show that the construction in \cite{BanOse} for Riesz transforms on $\bR^d$ extend to  Lie groups of compact type without change in their norms.  We may observe that, slightly more generally,  a similar statement holds on Riemannian manifolds for which the gradient $\nabla$ commutes with the heat semigroup $P_t$. From the Bakry-\'Emery criterion (see \cite{Bakry} ), such a commutation implies that the Ricci curvature of $\mathbb{M}$ is non negative.

If $G$ is a semisimple compact Lie group (see for instance the classical reference \cite{Hel} for an account about structure theory and harmonic analysis on semisimple compact Lie groups), we can deduce from the above result an interesting class of multipliers. 
\begin{proposition}
Let $G$ be a compact semisimple Lie group. Let $\Lambda^+$ be the set of highest weights and $\Delta$ be the set of  all roots. For $\alpha, \beta \in \Delta$, denote
\[
U_{\alpha,\beta} f =\sum_{\lambda \in \Lambda^+} d_\lambda \frac{\langle \lambda,  \alpha \rangle \langle \lambda,  \beta \rangle}{\| \lambda +\rho \|^2 -\| \rho \|^2} \chi_\lambda * f,
\]
where $\rho=\frac{1}{2} \sum_{\alpha \in \Delta} \alpha$, $\chi_\lambda$ is the character of the highest weight representation and $d_\lambda$ its dimension. Then, for $1<p<\infty$, $U_{\alpha,\beta}$ is bounded in $L^p(G)$ and
\[
\| U_{\alpha,\beta} f \|_p \leq (p^*-1) \| \alpha \| \| \beta \| \|f\|_p.
\]
\end{proposition}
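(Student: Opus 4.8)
The plan is to recognize $U_{\alpha,\beta}$ as a concrete instance of the second order Riesz transform combination $\sum_{i,j} A_{ij} R_i R_j$ on the compact semisimple Lie group $G$, and then quote Theorem \ref{Lie-Riesz}(i). First I would fix the orthonormal basis $X_1,\dots,X_d$ of $\mathfrak{g}$ used in \S\ref{LieRiesz} and observe that, by bi-invariance, a root vector direction $\alpha\in\Delta\subset\mathfrak{g}^*$ corresponds (via the Killing form inner product) to an element $H_\alpha\in\mathfrak{g}$, so the directional derivative $X_\alpha:=\sum_i \langle \alpha, X_i\rangle X_i$ is well-defined and $\|H_\alpha\| = \|\alpha\|$. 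The key computation is then to evaluate $R_{H_\alpha} R_{H_\beta}$, where $R_H := (-\sum_i X_i^2)^{-1/2} X_H$, on a matrix coefficient/character block: on the isotypic component of the highest weight $\lambda$, the Casimir $-\sum_i X_i^2$ acts as the scalar $\|\lambda+\rho\|^2 - \|\rho\|^2$ (this is the Freudenthal--de Siebenthal / standard Casimir eigenvalue formula), and $X_{H_\alpha}X_{H_\beta}$ contributes the bilinear factor $\langle\lambda,\alpha\rangle\langle\lambda,\beta\rangle$ after averaging over the representation. Matching these against the definition of $U_{\alpha,\beta}$ shows $U_{\alpha,\beta} = R_{H_\alpha} R_{H_\beta}$ exactly, i.e. it is $\mathcal{S}_A$ for the rank-one matrix $A = H_\alpha \otimes H_\beta$ in the chosen basis.

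The second step is the norm bookkeeping. With $A_{ij} = \langle\alpha,X_i\rangle\langle\beta,X_j\rangle$, the quadratic operator norm is $\|A\| = |H_\alpha|\,|H_\beta| = \|\alpha\|\,\|\beta\|$, since a rank-one matrix $uv^{\mathsf T}$ has operator norm $|u||v|$. Then Theorem \ref{Lie-Riesz}(i) — itself a consequence of Theorem \ref{main} via Theorem \ref{symm} — gives directly
\[
\|U_{\alpha,\beta} f\|_p = \Big\| \sum_{i,j=1}^d A_{ij} R_i R_j f \Big\|_p \le (p^*-1)\,\|A\|\,\|f\|_p = (p^*-1)\,\|\alpha\|\,\|\beta\|\,\|f\|_p,
\]
which is the claimed bound, and in particular yields $L^p$-boundedness for $1<p<\infty$.

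The main obstacle I expect is the representation-theoretic identification in the first step: one must carefully justify that, restricted to the $\lambda$-isotypic component, the (noncommutative) operator $X_{H_\alpha} X_{H_\beta}$ — averaged inside the convolution with $\chi_\lambda$ — produces precisely the symmetric bilinear scalar $\langle\lambda,\alpha\rangle\langle\lambda,\beta\rangle$ rather than some off-diagonal contribution. The clean way to see this is to pass to the spectral/Peter--Weyl picture: on the matrix block $\mathrm{End}(V_\lambda)$ the operators $X_i$ act by $d\pi_\lambda(X_i)$, and $\chi_\lambda * f$ projects onto the part of that block that is a scalar times $\chi_\lambda$; taking the trace against $\chi_\lambda$ turns $\operatorname{tr}(d\pi_\lambda(H_\alpha)\,d\pi_\lambda(H_\beta))$ — after normalizing by $d_\lambda$ — into the value of the invariant bilinear form, which on the highest weight is governed by $\langle\lambda+\rho,\cdot\rangle$; the $\rho$-shift cancels in the numerator because $H_\alpha,H_\beta$ lie in $[\mathfrak{g},\mathfrak{g}]$ and $\langle\rho,\alpha\rangle$-type terms drop out against the character. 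Once this bookkeeping is pinned down, everything else is a direct appeal to the already-proved Theorem \ref{Lie-Riesz}. Alternatively, and perhaps more transparently, one can simply take the displayed formula for $U_{\alpha,\beta}$ as the definition, verify it equals $\mathcal{S}_A f$ with $A=H_\alpha\otimes H_\beta$ by comparing Fourier multipliers using the formula for $\mathcal{S}_A$ on compact groups from \S\ref{LieRiesz} together with the Casimir eigenvalue identity, and invoke Theorem \ref{Lie-Riesz}(i); this sidesteps the noncommutativity issue since on each isotypic block the Casimir is central and the multiplier computation is then purely scalar.
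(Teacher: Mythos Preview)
Your approach is essentially the paper's: the entire proof there is the one-line observation that $U_{\alpha,\beta}=C^{-1}H_\alpha H_\beta$, where $C$ is the Casimir (Laplace--Beltrami) operator and $H_\alpha,H_\beta\in\mathbf{t}$ are the Killing-form duals of the roots, after which Theorem~\ref{Lie-Riesz}(i) with the rank-one matrix $A=H_\alpha\otimes H_\beta$ gives the bound. Your explicit norm computation $\|A\|=\|\alpha\|\,\|\beta\|$ and your flagged concern about the spectral identification are the only additions --- the paper simply asserts the identity without further comment.
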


\begin{proof}
Let $T$ be a maximal torus of $G$ and let $\mathbf{t}_0$ be its Lie algebra. Then, the sub-algebra $\mathbf{t}$ of $\mathfrak{g}$ generated by $\mathbf{t}_0$ is a Cartan sub-algebra. Let $\mathbf{t}^*$ denote the dual of $\mathbf{t}$. If $\alpha \in \Delta$, we denote by $H_\alpha$ the element of $\mathbf{t}$ such that for every $H \in  \mathbf{t}$, $\langle H, H_\alpha\rangle= \alpha (H)$ where $\langle  \cdot,\cdot \rangle$ is induced from the Killing form. With these notations, we  see that
\[
U_{\alpha,\beta}=  C^{-1}H_\alpha H_\beta,
\]
where $C$ is the Casimir operator which is also the Laplace-Beltrami operator.
\end{proof}

\end{document}